\ProcessOptions \RequirePackage{amsmath}
\newcommand{\hol}{{\mathcal Hol}}
\def\D{{\mathbb D}}
\def\T{{\mathbb T}}
\newcommand{\eiteta}{{e^{i\theta}}} 
\newcommand{\vb}{\varphi_a}
\newtheorem{defn}{Definition}[section]
\newcommand{\la}{\lambda}
\newcommand{\Di}{\ensuremath{{\mathcal D}}} 
\newcommand{\Q}{\ensuremath{{\mathcal Q}}}
\newcommand{\W}{\ensuremath{{\mathcal W}}} 
\newtheorem{lem}{Lemma}
\newtheorem{theorem}{Theorem}
\newtheorem{corollary}{Corollary}
\newtheorem{proposition}{Proposition}
\theoremstyle{definition}
\theoremstyle{remark}
\numberwithin{equation}{section}
\theoremstyle{theorem}
\newtheorem{other}{\bf Theorem}              
\newtheorem{otherl}{\bf Lemma}        
\newcommand{\n}[1]{\|#1\|}
\begin{document}
\title{A family of Dirichlet-Morrey spaces}

\author[P. Galanopoulos]{Petros Galanopoulos}
 \address{Department of Mathematics,
Aristotle University of Thessaloniki,
54124 Thessaloniki,
Greece}
 \email{petrosgala@math.auth.gr}

\author[N. Merch\'an]{Noel Merch\'an}
 \address{Departamento de An\'alisis Matem\'atico, Estad\'istica e Investigaci\'on Operativa,
   y Matem\'atica Aplicada. Facultad
de Ciencias, Universidad de M\'alaga, Campus de Teatinos, 29071
M\'alaga, Spain}
 \email{noel@uma.es}

 \author[A. G. Siskakis]{Aristomenis G.  Siskakis}
 \address{Department of Mathematics,
Aristotle University of Thessaloniki,
54124 Thessaloniki,
Greece}
 \email{siskakis@math.auth.gr}

\date{}
\keywords{Dirichlet spaces, Morrey spaces, $\Q_p$ spaces, Carleson measures,
 Integration operator, Pointwise multipliers  }

\begin{abstract}
To  each weighted Dirichlet space $\Di_p$, $0<p<1$, we associate a family of Morrey-type spaces $\Di_p^{\la}$, $0< \la < 1$,
constructed by imposing    growth conditions on the norm of hyperbolic translates of  functions. We indicate some of the properties
of these spaces, mention the characterization in terms of boundary values, and study
integration and multiplication operators on them.
\end{abstract}

\thanks{The research of the first and the second author was supported by the grants from Spain MTM2014-52865-P (Ministerio de
Econom\'{\i}a y Competitividad) and FQM-210 (Junta de Andaluc\'{\i}a). The second author was also supported by the grant from Spain FPU2013/01478 (Ministerio de
Educaci\'{o}n, Cultura y Deporte).}

\maketitle

\bigskip
\section{Introduction}\label{intro}
Let $\D$ be the unit disc in the complex plane and $\hol(\D)$ be the space of analytic functions in $\D$.
 A function $f\in\hol(\D)$ belongs to the Hardy space $H^2$ if
 $$
 \|f\|_{H^2}^2=
 \sup_{r\in (0,1)}\frac{1}{2\pi} \int_0^{2\pi} |f(r\eiteta)|^2\,d\theta<\infty\,.
 $$
If $f\in H^2$ then the  radial limits
$$
 \lim_{r\to 1} f(re^{i\theta})=f(e^{i\theta})
 $$
  exist almost everywhere on the unit circle $\T$  and
 \begin{equation}\label{Hardy Norm}
\|f\|_{H^2}^2=\frac{1}{2\pi} \int_0^{2\pi} |f(\eiteta)|^2\,d\theta.
\end{equation}

Recall that the space  $BMOA$ consists of all functions $f\in H^2$ whose boundary values  $f(e^{i\theta})$ have  bounded
mean oscillation, that is,
\begin{equation}\label{BMO}
\sup_{I\subseteq\T} \frac{1}{|I|} \int_I \left|f(e^{i\theta})- f_I\right|^2\, d\theta <\infty,
\end{equation}
where $f_I=\frac{1}{|I|}\int_I f(e^{it})\,dt$ is the average of $f$ over $I$, and
the supremum is taken over all subarcs $I$ of $\T$ with length $|I|$.
There are other equivalent  definitions for $BMOA$ one of which is the following.
For $a\in \D$ let
$$
\varphi_a(z)=\frac{a-z}{1-\bar{a}z}, \quad z\in\D,
$$
be the analytic automorphism of $\D$ which exchanges $0$ with  $a$,  and for $f\in H^2$ consider the set of
hyperbolic translates of $f$,
$$
S(f)=\{f\circ\varphi_a -f(a):  a\in \D\}.
$$
$BMOA$ then can be defined as the space of   all $f\in H^2$ such that
\begin{equation}\label{BMOA NORM}
\n{f}_{BMOA}=|f(0)|+  \sup\limits_{a\in\D} \|f\circ\vb- f(a)\|_{H^2}<\infty,
\end{equation}
i.e. $f\in BMOA$ if and only if the set $S(f)$ is  bounded  in the norm of $H^2$.
The above quantity defines a norm making $BMOA$ a Banach space.
More information on $BMOA$  can be found in  \cite{Gi}.

\textbf{Morrey spaces}. Morrey spaces were introduced in the 1930's in connection to
partial differential equations, and were subsequently studied as function classes in harmonic analysis on Euclidean spaces.
The analytic Morrey spaces were introduced recently and studied  by several authors,
see for example \cite{WX2}, \cite{XX}, \cite{XY} and \cite{Liu2}.

We  recall the definition  and some of their properties.
Observe  that
$$
\int_I \left|f(e^{i\theta})- f_I\right|^2\, d\theta \to 0 \qquad \text{as} \,\,|I|\to 0,
$$
for every $f\in H^2$, and the rate of this convergence to $0$ depends clearly on the  degree of oscillation of
$f$ around its average $f_I$ on $I$.
Given a $\la\in (0,1)$ we can isolate  functions $f$ for which this rate of convergence is comparable to   $|I|^{\la}$.
Thus for $f\in H^2$ we set
\begin{equation}\label{HardyMorrey}
\n{f}_{\la, \ast}=\sup_{I\subseteq\T} \left(\frac{1}{|I|^{\la}} \int_I \left|f(e^{i\theta})- f_I\right|^2\, d\theta\right)^{1/2},
\end{equation}
and define the space
\begin{equation}\notag
    H^{2, \la}=  \{f\in H^2:  \n{f}_{ \la, \ast}<\infty\}.
\end{equation}
This is a linear space. The seminorm $\n{\,\,}_{ \la, \ast}$ can be completed to a norm by adding  $|f(0)|$ to it,
making $H^{2, \la}$ into  a Banach space.

It is clear that for $\la=0$ or $\la=1$, $H^{2,\la}$
 reduces to  $H^2$ and $BMOA$ respectively, and for $0<\la<1$,
$$
BMOA\subset H^{2, \la}\subset H^2.
$$
Furthermore the following Carleson measure characterization holds
\begin{equation}\label{Carl-Morrey}
f\in H^{2, \la} \Leftrightarrow
\sup_{I\subset\T}\frac{1}{|I|^{\la}}\int_{S(I)}|f'(z)|^2(1-|z|^2)\,dm(z)<\infty,
\end{equation}
where
$$
S(I)=\{re^{i\theta}: 1-|I|\leq r<1,\, e^{i\theta}\in I\}
$$
are  the Carleson boxes based on arcs $I\subset \T$ and $dm(z)$ is the
 planar Lebesgue measure normalized so that the area of the unit disc is $1$.
This and several other properties of Morrey spaces can be found in \cite{XX}, \cite{XY} or \cite{XiG}. A characterization of $H^{2, \la}$ analogous to (\ref{BMOA NORM})  says that  if $f\in H^2$ then
 $f\in H^{2, \la}$ if and only if
\begin{equation}\label{Morrey-Mobius}
\n{f}_{{\la, \ast\ast}}=
\sup_{a\in \D}(1-|a|^2)^{\frac{1}{2}(1-\la)}\n{f\circ\varphi_a - f(a)}_{H^2}<\infty,
\end{equation}
and   $\n{f}_{H^{2, \la}}=|f(0)|+\n{f}_{{\la, \ast\ast}}$ is a norm, equivalent to  $|f(0)|+\n{f}_{ \la, \ast}$. Equivalently $H^{2, \la}$ can be constructed as the subspace of  $H^2$ containing the functions whose conformal translates have $H^2$ norms of restricted growth,
\begin{equation}\label{M-growth}
\n{f\circ\varphi_a - f(a)}_{H^2}\leq \frac{C}{(1-|a|^2)^{\frac{1}{2}(1-\la)}}, \,\, |a|\to 1,
\end{equation}
with the constant $C$ depending only on $f$.

\textbf{General Morrey-type spaces}. We take the opportunity to notice that  the above construction can be carried out in more general terms.
Suppose $X$ is a Banach space of analytic functions on $\D$ which contains the constant functions and such that   point evaluations $f\to f(a)$, $a\in\D$, are continuous linear functionals on $X$.
 Suppose also that $w:\D\to (0, \infty)$ is an appropriate weight function. Consider the  Morrey-type space generated by  $(X, w)$ which is  defined to be the space $M(X, w)$ of functions $f\in X$ such that $f\circ\varphi_a\in X$ for $a\in \D$ and
for which there is  a constant $C=C(f)$ such that
$$
 \n{f\circ\varphi_a -f(a)}_X\leq Cw(a), \quad a\in \D.
$$
Without any restrictions on  $w$ the  space $M(X, w)$ may reduce to $M(X, w)=\{0\}$ or $M(X, w)=X$ or it may consist  only of constant functions.  But generally there are  weights, appropriate for the base space $X$,  for which $M(X, w)$ is a nontrivial  proper subspace of $X$. For example if $w\equiv 1$ on $\D$ then this construction gives the M\"{o}bius invariant spaces $M(X)$ generated by $X$  considered in \cite{AS}, a particular case of which is $M(H^2)=BMOA$. A convenient class of  weights that can be considered in this construction are the  radial weights, $w(a)=w(|a|)$,  and it may  be assumed further that   $w(|a|)$  is  nondecreasing in $|a|$. A particular such   family is
\begin{equation}\label{w-power}
w(a) = (1-|a|)^{-s}, \quad a\in \D,
\end{equation}
with $s\geq 0$. The specific choice   $w_{\la}(a) =(1-|a|)^{- \frac{1}{2}(1-\la)}$ gives
$$
M(H^2, w_{\la})= H^{2, \la}, \quad 0\leq \la \leq 1.
$$

In the general case, if $(X,w)$ is a pair for which the resulting space $M(X,w)$ is nontrivial, then the quantity
$$
\n{f}_{M(X, w)}=|f(0)|+\sup_{a\in\D}\frac{1}{w(a)}\n{f\circ\varphi_a - f(a)}_X
$$
is a norm on  $M(X,w)$ and makes it into a Banach space. Interesting questions arise such as characterizing  $M(X,w)$ by  Carleson measure type conditions or in terms of boundary values of its functions. We will not pursue this further here, but will  concentrate instead on a  family of spaces obtained when $X$ is a weighted Dirichlet space and $w$ is of the form (\ref{w-power}).

\textbf{Dirichlet spaces and $\Q_p$ spaces}.
 Recall the following estimate for the norm of a function $f\in H^2$,
$$
\n{f}_{H^2}^2 \sim |f(0)|^2+\int_\D |f'(z)|^2(1-|z|^2)\,dm(z),
$$
where $\sim $ means that each of the two quantities is dominated by a
constant multiple of the other for all $f\in H^2$.
If the weight $1-|z|^2$  inside the integral is
 replaced by $\log(1/|z|^2)$ then the above estimate becomes an  identity valid for all $f\in H^2$,  known as the
 Littlewood-Paley identity.

 The weighted Dirichlet spaces $\Di_p$, $0\leq p<\infty$, are defined
 to contain those $f\in \hol(\D)$ for which
$$
\n{f}_{\Di_p}^2=|f(0)|^2+\int_\D |f^\prime(z)|^2(1-|z|^2)^p\, dm(z)<\infty.
$$
This quantity is a norm. Clearly $\Di_1=H^2$ with equivalence of norms, and $\Di_0$ is the classical
Dirichlet space   denoted by $\Di$. For $p>1$, $\Di_p$ coincides with
a  weighted Bergman space with weight $(1-|z|)^{p-2}$. If  $0<p<q $ then
$$
\Di \subset \Di_p\subset \Di_q,
$$
and there is a constant $C=C(p,q)$ such that $\n{f}_{\Di_q}\leq C\n{f}_{\Di_p}$ for each $f\in\Di_p$.

As in the case of $H^2$ we can consider the M\"{o}bius invariant version of $\Di_p$,
that is the subspace of $\Di_p$ which consists of
all $f\in \Di_p$ such that the set  $S(f)=\{f\circ\varphi_a -f(a):  a\in \D \}$ is bounded in $\Di_p$. These are the
spaces $\Q_p$,  originally defined and studied by Aulaskari, Xiao and Zhao \cite{AXZ}. Under  the norm
\begin{equation}
\n{f}_{\Q_p} =|f(0)|+\sup_{a\in\D} \|f\circ\vb- f(a)\|_{\Di_p},
\end{equation}
they are  Banach spaces and we have $\Q_0=\Di$, $\Q_1=BMOA$,  while for  all $p>1$,  $\Q_p$ coincides with the Bloch space
$\mathcal{B}$ of functions that satisfy $ \sup_{z\in\D}(1-|z|^2)|f'(z)|<\infty$, see \cite{Xi}. For the remaining values  $0<p<1$ the
resulting spaces satisfy
$$
\Di\subset \Q_p\subset BMOA
$$
and they form a strictly increasing  chain of M\"{o}bius invariant spaces, characterized by the Carleson measure condition,
$$
f\in \Q_p \Leftrightarrow \sup_{I\subset\T}\frac{1}{|I|^p}\int_{S(I)}|f'(z)|^2(1-|z|^2)^p\,dm(z)<\infty.
$$
For information on these spaces see \cite{Xi}  and \cite{XiG} and the references therein.

\textbf{Dirichlet-Morrey spaces}. Let $0\leq p\leq 1$ and  $f\in\Di_p$. The
following estimate is valid
$$
\n{f\circ\varphi_a-f(a)}_{\Di_p} \leq \frac{C\n{f}_{\Di_p}}{(1-|a|^2)^{\frac{p}{2}}}, \quad a\in \D,
$$
with the constant $C$ depending only on $p$.  In analogy with the construction of
$H^{2, \la}$ from the Hardy  space $H^2=\Di_1$ we define  the Dirichlet-Morrey spaces as follows.

\begin{defn}
Let $\la,\,p\in [0, 1]$. We say that an $f\in\hol(\D)$ belongs to the Dirichlet-Morrey space $\Di_p^{\la}$ if
\begin{equation}\label{DIRMORNORM}
\|f\|_{\Di_p^{\la}}= |f(0)|+\sup_{a\in\D} (1-|a|^2)^{\frac{p}{2}(1-\la)}\n{f\circ \varphi_a-f(a)}_{\Di_p}<\infty.
\end{equation}
\end{defn}

It is clear $\Di_p^{\la}$ is a linear space and the above quantity is a norm, under which $\Di_p^{\la}$ is a Banach space.
We see that  $\Di_1^{\la}=H^{2, \la}$ and that for each  $p$, $\Di_p^{1}=\Q_p$ and    $\Di_p^{0}=\Di_p$ and we have
$$
\Q_p\subseteq \Di_p^{\la} \subseteq \Di_p,\qquad 0<\la <1.
$$

In the rest of the article we will state some basic properties of Dirichlet-Morrey spaces, discuss briefly their characterization in terms of
boundary values and concentrate in Section 3 on the boundedness of integration operators and pointwise multipliers.

We will write $A\lesssim B $ between two quantities if there is a constant $C$ such that $A\leq C B$ for all values
of the parameter involved in the quantities $A, B$.
If both $A\lesssim  B$ and $B\lesssim  A$ are valid we write $A\sim B$. When a constant $C$ appears, its value
may be different from one step to the next.

\section{Some basic properties}

The following proposition gives a Carleson measure characterization of $\Di_p^\la$, which is analogous to
(\ref{Carl-Morrey}) for Hardy-Morrey spaces

 \begin{proposition}\label{Carl-DM}
Let $0<p, \la< 1$ and $f\in\hol(\D)$. Then the following are equivalent,
\begin{enumerate}
\item  $f\in \Di_p^\la$.
\item   $\n{f}_{p,\la, \ast}=\sup\limits_{I\subset\T} \left(\frac{1}{|I|^{p\la}}\int_{S(I)} |f'(z)|^2(1-|z|^2)^p\,dm(z)\right)<\infty,$
\end{enumerate}
and the norm $\n{f}_{\Di_p^{\la}}$ is comparable to $|f(0)|+\n{f}_{p,\la,\ast}$.
\end{proposition}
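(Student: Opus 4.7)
The plan is to convert the Möbius-invariant growth condition defining $\Di_p^\la$ into an integral condition on the measure $d\mu_f(z) = |f'(z)|^2(1-|z|^2)^p\,dm(z)$, and then recognize the latter as equivalent to the Carleson box condition in (2) via a standard characterization of $s$-Carleson measures.

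First I would perform the change of variable $w=\varphi_a(z)$ in the defining integral for the $\Di_p$ norm of $f\circ\varphi_a-f(a)$. Using $(f\circ\varphi_a)'(z)=f'(\varphi_a(z))\varphi_a'(z)$, the involution identity $\varphi_a'(\varphi_a(w))\varphi_a'(w)=1$, and
$$
1-|\varphi_a(w)|^2=\frac{(1-|a|^2)(1-|w|^2)}{|1-\bar a w|^2},
$$
one obtains
$$
\n{f\circ\varphi_a-f(a)}_{\Di_p}^{2}=\int_{\D}|f'(w)|^2\,\frac{(1-|a|^2)^p(1-|w|^2)^p}{|1-\bar a w|^{2p}}\,dm(w).
$$
Multiplying by $(1-|a|^2)^{p(1-\la)}$ as in (\ref{DIRMORNORM}), the condition $f\in\Di_p^\la$ becomes
$$
\sup_{a\in\D}\int_{\D}\frac{(1-|a|^2)^{p(2-\la)}}{|1-\bar a w|^{2p}}\,d\mu_f(w)<\infty,
$$
where $d\mu_f(w)=|f'(w)|^2(1-|w|^2)^p\,dm(w)$.

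Next I would invoke the classical fact that for a positive Borel measure $\mu$ on $\D$ and parameters $0<s<t$, $\mu$ is an $s$-Carleson measure (i.e.\ $\mu(S(I))\lesssim |I|^s$) if and only if
$$
\sup_{a\in\D}\int_{\D}\frac{(1-|a|^2)^{t-s}}{|1-\bar a w|^{t}}\,d\mu(w)<\infty.
$$
Setting $s=p\la$ and $t=2p$ (so $t-s=p(2-\la)$), this equivalence converts the integral condition just obtained into precisely the Carleson box condition in (2), with norms comparable.

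The main technical point is the Carleson-type equivalence in the last step. One direction is immediate: if $f\in\Di_p^\la$, specialize to $a$ with $|a|=1-|I|$ and $a/|a|$ at the center of $I$; then on $S(I)$ one has $|1-\bar a w|\sim 1-|a|\sim |I|$, and the integral over $S(I)$ alone yields $\mu_f(S(I))\lesssim |I|^{p\la}$. The converse requires decomposing $\D$ into dyadic Carleson shells $S(2^kI_a)\setminus S(2^{k-1}I_a)$ on which $|1-\bar a w|\sim 2^k(1-|a|)$, then summing a geometric series in $k$; this is the only step where real work is needed, and it is a routine but careful estimate which I would cite from \cite{XiG} or \cite{Xi} rather than reprove.
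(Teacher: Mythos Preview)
Your proposal is correct and follows essentially the same route as the paper: the change of variable yielding $\n{f\circ\varphi_a-f(a)}_{\Di_p}^2=\int_\D\frac{(1-|a|^2)^p}{|1-\bar a w|^{2p}}\,d\mu_f(w)$, the specialization $a=(1-|I|)\zeta$ for $(1)\Rightarrow(2)$, and the citation of the $s$-Carleson characterization (your $s=p\la$, $t=2p$ is exactly the paper's $q=2p-p\la$ in \cite[Lemma~3.1.1]{XiG}) for $(2)\Rightarrow(1)$. The only cosmetic difference is that you front-load the change of variable and phrase both directions through the single integral equivalence, whereas the paper handles $(1)\Rightarrow(2)$ by a direct estimate before invoking the lemma for the converse.
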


\begin{proof} Assume $f\in\Di_p^\la$. For an interval  $I\subset\T$ let $\zeta$ be the midpoint of $I$ and let $a=a_I=(1-|I|)\zeta$.
Note that
$$
|1-\overline{a}z|\sim |I|=1-|a|\sim 1-|a|^2, \qquad z\in S(I),
$$
thus
\begin{align*}
&\frac{1}{|I|^{p\la}}\int_{S(I)} |f'(z)|^2(1-|z|^2)^p\,dm(z) \\
&\sim
\frac{1}{|I|^{p\la}} \int_{S(I)} |f'(z)|^2 (1-|z|^2)^p\frac{(1-|a|^2)^{2p}}{|1-\overline{a}z|^{2p}}\,dm(z)\\
&\sim |I|^{p(1-\la)} \int_{S(I)} |f'(z)|^2(1-|\varphi_{a}(z)|^2)^p\,dm(z)\\
&\lesssim (1-|a|^2)^{p(1-\la)}\int_{\D} |f'(z)|^2(1-|\varphi_{a}(z)|^2)^p\,dm(z)\\
&=(1-|a|^2)^{p(1-\la)}\int_{\D} |(f\circ\varphi_a)'(z)|^2(1-|z|^2)^p\,dm(z)\\
& =(1-|a|^2)^{p(1-\la)}\n{f\circ\varphi_a-f(a)}_{\Di_p}^2\\
& \leq \n{f}_{\Di_p^{\la}}^2.
\end{align*}
This is valid for each interval  $I\subset\T$ and taking supremum shows that (1) implies (2).

Conversely suppose (2)  holds. That is,   for the nonnegative measure $d\mu(z)=|f'(z)|^2(1-|z|^2)^p\,dm(z)$
there is a constant $C$ such that
$$
\mu(S(I))=\int_{S(I)} d\mu(z)\leq C|I|^{p\la}
$$
 for all $I\subset \T$, i.e. $d\mu$ is a $p\la$-Carleson measure.  Then for $a\in \D$,
\begin{align*}
 \n{f\circ\varphi_a-f(a)}_{\Di_p}^2&=\int_{\D}|f'(z)|^2(1-|\varphi_a(z)|^2)^p\,dm(z)\\
 &=\int_{\D} |f'(z)|^2\frac{(1-|z|^2)^p(1-|a|^2)^p}{|1-\overline{a}z|^{2p}}\,dm(z)\\
 &= \int_{\D}\frac{(1-|a|^2)^p}{|1-\overline{a}z|^{2p}}\,d\mu(z).
\end{align*}
Thus we have
\begin{align*}
\sup_{a\in \D}(1-|a|^2)^{p(1-\la)}\n{f\circ\varphi_a-f(a)}_{\Di_p}^2
&=\sup_{a\in \D} \int_{\D}\frac{(1-|a|^2)^{2p-p\la}}{|1-\overline{a}z|^{2p}}\,d\mu(z)\\
&= \sup_{a\in \D}\int_{\D}\frac{(1-|a|^2)^q }{|1-\overline{a}z|^{q+p\la}}\,d\mu(z)\\
&<\infty,
\end{align*}
by using the characterization of Carleson  measures in \cite[Lemma 3.1.1] {XiG} with  $q=2p-p\la>0 $, completing the proof.
\end{proof}

 \begin{proposition}
Let $0< p, \la< 1$ then,
\begin{enumerate}
\item There is a constant $C=C(p, \la)$ such that any   $f\in\Di_p^{\la}$ satisfies
\begin{equation}\label{growth}
|f(z)|\le \frac{C\n{f}_{\Di_p^{\la}}}{(1-|z|)^{\frac{p}{2}(1-\la)}}, \quad z\in \D
\end{equation}
\item The function $f_{p,\la}(z)= (1-z)^{-\frac{p}{2}(1-\la)}$ belongs to $\Di_p^{\la}$.
\end{enumerate}
\end{proposition}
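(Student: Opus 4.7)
\smallskip
\noindent\emph{Proof plan.}

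For part (1) the strategy is to exploit the definition of the norm at the critical choice $a=z$. Setting $g(w)=f(\varphi_z(w))-f(z)$, the chain rule and the identities $\varphi_z(0)=z$, $\varphi_z'(0)=-(1-|z|^2)$ give $g(0)=0$ and
\[
|g'(0)|=(1-|z|^2)|f'(z)|.
\]
A direct computation with the $\Di_p$ norm of a monomial (using $\int_\D 2r(1-r^2)^p\,dm(z)=\tfrac{1}{p+1}$) shows that for any $h\in\Di_p$ one has the point evaluation bound $|h'(0)|\le C_p\n{h}_{\Di_p}$. Applying this to $g$ and invoking the definition of $\n{f}_{\Di_p^\la}$ at the point $a=z$ yields
\[
(1-|z|^2)|f'(z)|\ \le\ C_p\,\n{g}_{\Di_p}\ \le\ \frac{C_p\,\n{f}_{\Di_p^\la}}{(1-|z|^2)^{\frac{p}{2}(1-\la)}}.
\]
Since the exponent $1+\tfrac{p}{2}(1-\la)$ is strictly greater than $1$, integrating this pointwise bound on $|f'|$ along the radius from $0$ to $z$ gives $|f(z)-f(0)|\lesssim \n{f}_{\Di_p^\la}(1-|z|)^{-\frac{p}{2}(1-\la)}$, from which (\ref{growth}) follows after absorbing $|f(0)|$ into $\n{f}_{\Di_p^\la}$.

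For part (2) the plan is to verify the Carleson measure condition of Proposition~\ref{Carl-DM}. With $\alpha=\tfrac{p}{2}(1-\la)$ and $f_{p,\la}'(z)=\alpha(1-z)^{-\alpha-1}$, one must show
\[
\sup_{I\subset\T}\frac{1}{|I|^{p\la}}\int_{S(I)}\frac{(1-|z|^2)^p}{|1-z|^{2\alpha+2}}\,dm(z)<\infty.
\]
For arcs $I$ whose distance to the point $1$ exceeds $|I|$, the factor $|1-z|^{-2\alpha-2}$ is essentially constant over $S(I)$ and a direct size estimate using $|S(I)|\sim|I|^2$ and $(1-|z|^2)^p\sim|I|^p$ gives a bound of order $1$. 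The critical case is when $I$ is essentially centered at $1$ with $|I|=\delta$; using $|1-re^{i\theta}|\sim\max(1-r,|\theta|)$ and setting $u=1-r$ the integral reduces to
\[
\int_0^\delta u^p\int_{-\delta/2}^{\delta/2}\bigl(\max(u,|\theta|)\bigr)^{-2\alpha-2}\,d\theta\,du.
\]
Splitting the inner integral into $\{|\theta|\le u\}$ and $\{|\theta|>u\}$ gives an inner bound $\lesssim u^{-2\alpha-1}$ (here $2\alpha+1>0$), so the whole quantity is $\lesssim\int_0^\delta u^{p-2\alpha-1}\,du=\int_0^\delta u^{p\la-1}\,du\sim\delta^{p\la}$ since $0<p\la<1$. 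Dividing by $|I|^{p\la}=\delta^{p\la}$ produces a uniform bound, and an application of Proposition~\ref{Carl-DM} gives $f_{p,\la}\in\Di_p^\la$.

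The main obstacle is the integral estimate for (2) at arcs near the boundary singularity $z=1$; once this is handled by the max-splitting above the verification is automatic. Part (1) is then essentially mechanical provided the point-evaluation estimate $|h'(0)|\lesssim\n{h}_{\Di_p}$ is explicitly available, and the example function in (2) shows that the exponent in (1) is best possible.
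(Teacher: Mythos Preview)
Your argument is correct and, for part (1), essentially identical to the paper's: both extract the pointwise bound
\[
(1-|z|^2)|f'(z)|\ \le\ C_p\,\n{f\circ\varphi_z-f(z)}_{\Di_p}
\]
from an evaluation-at-zero inequality (the paper cites the weighted mean-value estimate $|g(0)|^2\le (p+1)\int_\D|g|^2(1-|z|^2)^p\,dm$ from \cite{Zhu} applied to $g=(f\circ\varphi_z)'$, which is exactly your $|h'(0)|\lesssim\n{h}_{\Di_p}$), and then integrate radially.

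For part (2) you both start from the Carleson-measure characterization (Proposition~\ref{Carl-DM}), but the computational routes differ. The paper works with Carleson lunes $S(b,h)=\{|z-b|<h\}$ rather than boxes, and uses the single pointwise inequality $(1-|z|^2)^p\le|1-z|^p$ to collapse the integrand to $|1-z|^{-(2-p\la)}$; since this function is radially decreasing about $1$, the lune at $b=1$ dominates all others, and a polar change of variable $w=1-z$ gives $\int_{|w|<h}|w|^{-(2-p\la)}\,dm(w)\sim h^{p\la}$ in one line. Your near/far dichotomy with the $\max(u,|\theta|)$ splitting reaches the same exponent but at the cost of a case analysis; note also that in your ``far'' case the statement $(1-|z|^2)^p\sim|I|^p$ should be read as the upper bound $(1-|z|^2)^p\le|I|^p$ on $S(I)$, which is all that is needed, and the intermediate case (arcs close to but not centered at $1$) is handled by enlarging $I$ to a comparable arc centered at $1$. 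The paper's trick is worth internalizing: bounding $(1-|z|^2)$ by $|1-z|$ often converts a two-parameter Carleson estimate into a single radial integral about the singular boundary point.
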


\begin{proof} (i) Suppose $f\in\Di_p^{\la}$.
We apply the inequality
$$
|g(0)|^2\leq (p+1) \int_{\D}|g(z)|^2(1-|z|^2)^p\,dm(z),
$$
see \cite[Lemma 4.12]{Zhu},  valid for all  analytic $g$ on $\D$, to the function
$g=(f\circ\varphi_w-f(w))'$ to obtain
\begin{align*}
|f'(w)|^2 (1-|w|^2)^2 &\leq (p+1) \int_{\D}|(f\circ\varphi_w)'(z)|^2(1-|z|^2)^p\,dm(z)\\
&= \frac{p+1}{(1-|w|^2)^{p(1-\la)}}  \left((1-|w|^2)^{p(1-\la)}\n{f\circ\varphi_w-f(w)}_{\Di_p}^2\right)\\
&\leq\frac{p+1}{(1-|w|^2)^{p(1-\la)}} \n{f}_{\Di_p^\la}^2,
\end{align*}
for each $w\in\D$. Thus
$$
|f'(w)|\leq\frac{(p+1)^{1/2}}{(1-|w|^2)^{1+\frac{p}{2}(1-\la)}} \n{f}_{\Di_p^\la},\quad w\in \D.
$$
Using this and the integration  $f(z)-f(0)=\int_0^zf'(\zeta)\,d\zeta$ we obtain  the desired growth inequality.

(ii) We will verify that $|f'_{p, \la}(z)|^2(1-|z|^2)^p\,dm(z)$ is a $p\la$-Carleson measure and then
Proposition \ref{Carl-DM} gives the conclusion.
In doing so, it is more convenient to work with the equivalent family of Carleson lune-shaped sets $S(b, h)=\{z\in \D: |b-z|<h\}$,
where $b\in\T$ and $0<h<1$,  than with the Carleson boxes $S(I), I\subset\T$. Thus it suffices to show that
\begin{equation}\label{eq-lunes}
\sup_{b\in\T, 0<h<1}\frac{1}{h^{p\la}}\int_{S(b, h)}|f'_{p, \la}(z)|^2 (1-|z|^2)^p\,dm(z)<\infty.
\end{equation}
We have
\begin{align*}
\int_{S(b, h)}|f'_{p, \la}(z)|^2(1-|z|^2)^p\,dm(z)&=
 C_1\int_{S(b, h)}\frac{(1-|z|^2)^p}{|1-z|^{2+p(1-\la)}}\,dm(z)\\
 &\lesssim \int_{S(b, h)}\frac{1}{|1-z|^{2-p\la}}\,dm(z)\\
 &\lesssim  \int_{S(1, h)}\frac{1}{|1-z|^{2-p\la}}\,dm(z)\\
 &\lesssim \int_{|w|<h}\frac{1}{|w|^{2-p\la}}\,dm(w)\\
 &= \int_0^h\frac{1}{r^{1-p\la}}dr\\
 &= h^{p\la}.
\end{align*}
Thus (\ref{eq-lunes}) holds and the proof is finished.
\end{proof}
Observe that both parts of the above Proposition are also valid when $p=1, 0<\la<1$.

\begin{proposition}
Let $\la_1, p_1, \la_2, p_2 \in(0,1)$. Then
$$
\Di_{p_1}^{\la_1} \subseteq \Di_{p_2}^{\la_2}
\Longleftrightarrow p_1\le p_2 \text{ and } p_1(1-\la_1)\le p_2(1-\la_2).
$$
\end{proposition}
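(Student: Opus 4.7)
For the implication $(\Leftarrow)$, assume both inequalities and let $f \in \Di_{p_1}^{\lambda_1}$. For a subarc $I \subset \T$ and $z \in S(I)$ we have $1 - |z|^2 \lesssim |I|$, which together with $p_1 \leq p_2$ gives $(1-|z|^2)^{p_2} \lesssim |I|^{p_2 - p_1}(1-|z|^2)^{p_1}$. Substituting into the Carleson integral of Proposition 1 yields
\begin{multline*}
\frac{1}{|I|^{p_2\lambda_2}}\int_{S(I)}|f'(z)|^2(1-|z|^2)^{p_2}\,dm(z) \\
\lesssim |I|^{p_2(1-\lambda_2) - p_1(1-\lambda_1)} \cdot \frac{1}{|I|^{p_1\lambda_1}}\int_{S(I)}|f'(z)|^2(1-|z|^2)^{p_1}\,dm(z),
\end{multline*}
and the exponent on the prefactor is nonnegative by the second hypothesis, so the prefactor stays bounded as $|I|$ varies. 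Taking the supremum over $I$ and invoking the converse direction of Proposition 1 places $f$ in $\Di_{p_2}^{\lambda_2}$.

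For the implication $(\Rightarrow)$, assume $\Di_{p_1}^{\lambda_1} \subseteq \Di_{p_2}^{\lambda_2}$ and derive the two inequalities from separate test functions. The function $f_{p_1,\lambda_1}(z) = (1-z)^{-p_1(1-\lambda_1)/2}$ lies in $\Di_{p_1}^{\lambda_1}$ by Proposition 2(ii), hence in $\Di_{p_2}^{\lambda_2}$; the growth bound of Proposition 2(i) for $\Di_{p_2}^{\lambda_2}$, evaluated along the real segment $z = r \in [0,1)$, forces $(1-r)^{-p_1(1-\lambda_1)/2} \lesssim (1-r)^{-p_2(1-\lambda_2)/2}$ as $r \to 1^-$, so $p_1(1-\lambda_1) \leq p_2(1-\lambda_2)$. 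To obtain $p_1 \leq p_2$, combine the routine inclusions $\Q_{p_1} \subseteq \Di_{p_1}^{\lambda_1}$ (since $(1-|a|^2)^{p_1(1-\lambda_1)/2} \leq 1$) and $\Di_{p_2}^{\lambda_2} \subseteq \Di_{p_2}$ (evaluate at $a=0$) with the hypothesis to get $\Q_{p_1} \subseteq \Di_{p_2}$. The classical lacunary-series characterization (see e.g.\ \cite{Xi}) asserts that for $f(z) = \sum_k a_k z^{2^k}$ and $0 < p \leq 1$, membership in either $\Q_p$ or $\Di_p$ reduces to $\sum_k |a_k|^2 2^{k(1-p)} < \infty$. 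If $p_1 > p_2$, pick $\alpha$ in the nonempty interval $((1-p_1)/2,(1-p_2)/2]$ and set $a_k = 2^{-k\alpha}$; the resulting lacunary function lies in $\Q_{p_1} \setminus \Di_{p_2}$, a contradiction. Hence $p_1 \leq p_2$.

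The principal difficulty is the necessity of $p_1 \leq p_2$: the singular functions $(1-z)^{-s}$ see the parameters only through the product $p(1-\lambda)$, so they cannot distinguish pairs with $p_1(1-\lambda_1) = p_2(1-\lambda_2)$ and $p_1 \neq p_2$. Lacunary series, whose membership in $\Q_p$ and $\Di_p$ depends solely on $p$, supply the additional rigidity needed to isolate the second condition.
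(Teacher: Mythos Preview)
Your proof is correct and follows essentially the same route as the paper's: the $(\Leftarrow)$ direction is identical (Carleson-box estimate via Proposition~1), and for $(\Rightarrow)$ both you and the paper deduce $p_1(1-\la_1)\le p_2(1-\la_2)$ from the growth estimate (\ref{growth}) and obtain $p_1\le p_2$ by combining the chain $\Q_{p_1}\subseteq\Di_{p_1}^{\la_1}\subseteq\Di_{p_2}^{\la_2}\subseteq\Di_{p_2}$ with the lacunary-series characterization of \cite[Theorem~1.2.1]{Xi}. The only cosmetic difference is that the paper quotes the abstract consequence $HG\cap\Di_{p_1}=HG\cap\Di_{p_2}$ to reach a contradiction, whereas you write down an explicit gap series $\sum 2^{-k\alpha}z^{2^k}$ in $\Q_{p_1}\setminus\Di_{p_2}$; your closing remark on why lacunary series are needed is a nice addition.
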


\begin{proof}
Assume  $ p_1\le p_2$ and $ p_1(1-\la_1)\le p_2(1-\la_2)$ and let $f\in\Di_{p_1}^{\la_1}$ and  $I\subset\T$. Then
\begin{align*}
\frac{1}{|I|^{ p_2\la_2}}&\int_{S(I)} |f'(z)|^2(1-|z|^2)^{ p_2}\, dm(z)\\
&=\frac{1}{|I|^{ p_2\la_2}}\int_{S(I)} |f'(z)|^2(1-|z|^2)^{ p_1}(1-|z|^2)^{ p_2- p_1}\, dm(z)\\
&\leq \frac{|I|^{ p_2- p_1}}{|I|^{ p_2\la_2}}\int_{S(I)} |f'(z)|^2(1-|z|^2)^{ p_1}\, dm(z)\\
&= |I|^{ p_2(1-\la_2)- p_1(1-\la_1)}\left(\frac{1}{|I|^{ p_1\la_1}}\int_{S(I)} |f'(z)|^2(1-|z|^2)^{ p_1}\, dm(z) \right)
\end{align*}
and by Proposition \ref{Carl-DM}, it follows  $\Di_{p_1}^{\la_1} \subseteq \Di_{p_2}^{\la_2}$.

 Assume now that $\Di_{p_1}^{\la_1} \subseteq \Di_{p_2}^{\la_2}$.
Then it is necessary that $ p_1\le p_2$. The easiest way to see this is to
use the class $HG$ of  functions in $\hol(\D)$ whose Taylor series with center at $0$ has Hadamard gaps. According
to \cite[Theorem 1.2.1]{Xi} for $0<p<1$ we have $HG\cap \Q_p= HG\cap \Di_p$, and for  $0<p<q<1$ we have
$HG\cap \Di_p\subsetneq HG\cap \Di_q$. If we assume
 that $p_2<p_1$ then   $\Di_{p_2}\subseteq \Di_{p_1}$ and using the assumption
 $\Di_{p_1}^{\la_1} \subseteq \Di_{p_2}^{\la_2}$  we will have further
$\Q_{ p_1}\subseteq\Di_{p_1}^{\la_1}\subseteq \Di_{p_2}^{\la_2}
\subseteq \Di_{p_2} \subseteq \Di_{ p_1}$. This would imply  that
$HG\cap \Di_{p_1}=HG\cap \Di_{p_2}$
which contradicts  part of the above mentioned theorem. In addition from (\ref{growth}) it follows easily
that $p_1(1-\la_1)\leq  p_2(1-\la_2)$.
\end{proof}

We next discuss the boundary values characterization of Dirichlet-Morrey spaces. Recall the corresponding result
for Dirichlet spaces and $\Q_p$ spaces from \cite[Lemma 6.1.1.]{Xi}.
If  $f\in H^2$ and   $0<p<1$, then  $f\in \Di_p$ if and only if
$$
\int_{\T}\int_{\T} \frac{|f(u)-f(v)|^2}{|u-v|^{2-p}}\,|du||dv|<\infty,
$$
where the simplified notation is  $u=e^{i\theta}\in\T$ and $|du| =d\theta$. This result together
with the fact that   $\Q_p$ is the M\"{o}bius invariant
version of $\Di_p$, is used to prove Theorem 6.1.1. in \cite{Xi} which says that  for $0<p<1$, $f\in \Q_p$ if and only if
$$
\sup_{I\subset\T} \frac{1}{|I|^p}\int_{I}\int_{I}\frac{|f(u)-f(v)|^2}{|u-v|^{2-p}}\,|du||dv|<\infty,
$$
where the supremum is taken over all arcs $I\subset\T$. The proof of this result is rather long and technical. But it is
easily adapted without any new conceptual or technical requirements  to obtain the following characterization
of Dirichlet-Morrey spaces.

\begin{theorem}
Suppose $f\in H^2$ and let $0<p, \la <1$. Then $f\in \Di_p^{\la}$ if and only if
$$
\sup_{I\subset\T}\frac{1}{|I|^{ p\la}}\int_I\int_I\frac{|f(u)-f(v)|^2}{|u-v|^{2- p}}\,|du|\,|dv|<\infty.
$$
\end{theorem}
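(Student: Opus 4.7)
The plan is to recycle the proof of Xiao's Theorem 6.1.1 in \cite{Xi} (the $\Q_p$ case, corresponding to $\la=1$), observing that the normalizing power $|I|^p$ there is merely a passive factor that can be replaced by $|I|^{p\la}$ without disturbing any of the genuine estimates. Both directions hinge on the M\"obius change of variables used in the proof of Proposition \ref{Carl-DM}: for an arc $I\subset\T$ with midpoint $\zeta$, take $a=a_I=(1-|I|)\zeta$ and substitute $u=\varphi_a(s)$, $v=\varphi_a(t)$. The involution identity $|1-\bar a\varphi_a(s)|=(1-|a|^2)/|1-\bar a s|$ on $\T$ gives $|1-\bar a s|\sim 1$ for $s\in\varphi_a(I)$, and elementary manipulation with the standard formulas for $|\varphi_a(s)-\varphi_a(t)|$ and $|\varphi_a'(s)|$ yields
$$
\frac{|du|\,|dv|}{|u-v|^{2-p}}\sim (1-|a|^2)^p\,\frac{|ds|\,|dt|}{|s-t|^{2-p}}\sim|I|^p\,\frac{|ds|\,|dt|}{|s-t|^{2-p}}
$$
uniformly for $s,t\in\varphi_a(I)$.

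For necessity, assume $f\in\Di_p^{\la}$. Combining the transformation above with the enlargement of the domain from $\varphi_a(I)\times\varphi_a(I)$ to $\T\times\T$ gives
$$
\int_I\int_I\frac{|f(u)-f(v)|^2}{|u-v|^{2-p}}\,|du|\,|dv|\lesssim |I|^p\int_\T\int_\T\frac{|(f\circ\varphi_a)(s)-(f\circ\varphi_a)(t)|^2}{|s-t|^{2-p}}\,|ds|\,|dt|.
$$
The Besov-type identification of $\Di_p$ recalled just before the theorem shows that the last double integral is comparable to $\n{f\circ\varphi_a-f(a)}_{\Di_p}^2$, which in turn is bounded by $\n{f}_{\Di_p^\la}^2/(1-|a|^2)^{p(1-\la)}\sim\n{f}_{\Di_p^\la}^2/|I|^{p(1-\la)}$. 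The two powers of $|I|$ combine to $|I|^{p\la}$, and dividing through and taking the supremum over $I$ completes this half.

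For sufficiency, by Proposition \ref{Carl-DM} it suffices to prove that $d\mu(z)=|f'(z)|^2(1-|z|^2)^p\,dm(z)$ is a $p\la$-Carleson measure. This is exactly the content of the corresponding direction in Xiao's proof: via a Whitney-type decomposition of the Carleson box $S(I)$ anchored at dyadic subarcs of $I$, together with the standard representation of $f'$ through boundary differences, he establishes the pointwise-in-$I$ estimate $\mu(S(I))\lesssim\int_I\int_I|f(u)-f(v)|^2|u-v|^{p-2}\,|du|\,|dv|$ with a constant depending only on $p$. The normalizing exponent (his $p$, our $p\la$) enters this chain only at the very end, when one divides by $|I|^{p\la}$ and passes to the supremum, and so does not influence the decomposition itself. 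The main obstacle is therefore not conceptual but one of bookkeeping: one has to traverse Xiao's decomposition step by step and confirm that no stage secretly uses $\la=1$. Once this verification is done the argument transfers verbatim.
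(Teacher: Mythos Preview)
Your proposal is correct and follows essentially the same route as the paper: the paper itself does not write out a proof, stating only that Xiao's argument for \cite[Theorem 6.1.1]{Xi} ``is easily adapted without any new conceptual or technical requirements,'' which is precisely what you outline (and you in fact supply more detail than the paper does about how the M\"obius substitution and the passive exponent $|I|^{p\la}$ enter).
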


\section{Pointwise multipliers}

Let $X$ be a Banach space of analytic functions on $\D$. A function $g\in \hol(\D)$
is said to be a multiplier of $X$ if the multiplication operator
$$
M_g(f)(z) =g(z)f(z), \quad f\in X
$$
is a bounded operator on  $X$.
For this it is  usually enough to check that  $M_g(X)\subset X$ and apply the closed graph theorem. The space of all
multipliers of $X$ is  denoted by $M(X)$. Multiplication operators are closely related
to  integration operators $J_g$ and $I_g$. These are induced by symbols  $g\in \hol(\D)$ as follows
$$
J_g(f)(z)=\int_0^z f(w)g'(w)\,dw, \quad z\in \D,
$$
and
$$
I_g(f)(z)=\int_0^z f'(w)g(w)\,dw, \quad z\in \D,
$$
 and act on functions $ f\in\hol(\D) $.  The operators $I_g, J_g$  have been studied in a number of papers, see
for example \cite{ALC}, \cite{ALS}, \cite{GaGiPe},  \cite{Gi} and \cite{Li}. Their relation to $M_g$ comes from the integration
by parts formula
\begin{equation}\label{MulVol}
  J_g(f)(z) =M_g(f)(z)-f(0)g(0)-I_g(f)(z).
\end{equation}
 This essentially says that  if $g$ is a symbol for which two of the operators $I_g, J_g, M_g$ are bounded on a space  $X$
  so is the third. It also says that it is possible for two of the operators to be  unbounded but the third is bounded
 due to  cancellation.

The space of  multipliers is known for several of the classical spaces such as Hardy and Bergman spaces.
In particular for $H^2=\Di_1$ the space of multipliers is
 $M(H^2)=H^{\infty}$, the algebra of bounded analytic functions. For other Dirichlet spaces $\Di_p$,  $p \in (0,1)$,
the situation is more complicated. The   description of $M(\Di_p)$ is in terms of  $\Di_p$-Carleson measures.
Recall that a positive Borel measure $\mu$ on the disc is a $\Di_p$-Carleson measure if there is a constant  $C=C(\mu)$
such that
$$
\int_\D |f(z)|^2\,d\mu(z)\leq C \|f\|^2_{\Di_p},\qquad  f\in\Di_p .
$$

These measures  were  described  initially by Stegena \cite{St} with the help of Bessel capacities, and similar
characterizations were given by  other authors.

In another approach,  Arcozzi, Rochberg and Sawyer \cite{ARS} described these measures by a different condition, a simplified
form of which is given in  \cite{GP}. Accordingly a finite $\mu$ is $\Di_p$-Carleson if and only if
\begin{equation}\label{GPCM}
\sup_{w\in\D} \frac{1}{\mu(S(w))} \int_{S(w)} \frac{(\mu(S(z)\cap S(w)))^2}{(1-|z|^2)^{2+ p}}\, dm(z) <\infty,
\end{equation}
where for $w\in \D$ the set $S(w)$ on which  integration takes place is the
Carleson box $S(w)=\{z\in\D : 1-|z|\leq 1-|w|,\, |arg(\bar{z}w)|\leq \pi(1-|w|)\}$.

It is convenient at this point to use the space $\W_p$ of functions
$g\in \hol(\D)$ such  that the measure
$$
d\mu_g(z) = |g'(z)|^2(1-|z|^2)^p\,dm(z)
$$
is a $\Di_p$-Carleson measure. This space has been studied \cite{RW} and \cite{W}.
The multipliers of $\Di_p$ were described in  \cite{St} as follows.

 \begin{other}\label{STG}
Suppose $0<p<1$ and   $g\in\hol(\D)$. Then  $g \in M(\Di_p)$
 if and only if $g\in H^{\infty}$  and  $d\mu_g(z) = |g'(z)|^2 (1-|z|^2)^{ p}\,dm(z)$ is a
$\Di_p$-Carleson measure. In other words,
$$
M(\Di_p)= H^{\infty}\cap \W_p.
$$
\end{other}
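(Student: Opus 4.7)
The plan is to prove the two inclusions separately and then remark on where the real difficulty in understanding $M(\Di_p)$ lies. For the sufficient direction, assume $g\in H^{\infty}\cap \W_p$, so that $g$ is bounded and $d\mu_g=|g'|^2(1-|z|^2)^p\,dm$ is a $\Di_p$-Carleson measure. Starting from the product rule $(gf)'=g'f+gf'$ I would split
$$
\int_{\D}|(gf)'(z)|^2(1-|z|^2)^p\,dm(z)\le 2\int_{\D}|f(z)|^2\,d\mu_g(z)+2\|g\|_{\infty}^{2}\int_{\D}|f'(z)|^2(1-|z|^2)^p\,dm(z).
$$
The first integral on the right is at most a constant multiple of $\|f\|_{\Di_p}^{2}$ by the Carleson property of $\mu_g$, the second is controlled by $\|g\|_{\infty}^{2}\|f\|_{\Di_p}^{2}$, and the constant term $|g(0)f(0)|$ is dominated by $\|g\|_{\infty}\|f\|_{\Di_p}$. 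Adding everything gives $\|M_g f\|_{\Di_p}\lesssim \|f\|_{\Di_p}$, which is the boundedness of $M_g$ on $\Di_p$.

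For the converse, suppose $M_g$ is bounded on $\Di_p$. Two things have to be extracted: that $g\in H^{\infty}$ and that $d\mu_g$ is $\Di_p$-Carleson. For the former I would use that $\Di_p$ is a reproducing kernel Hilbert space (which is built into the setup through the continuity of point evaluations). Writing $K_a$ for the reproducing kernel at $a\in\D$, the identity $\langle M_g f,K_a\rangle=g(a)\langle f,K_a\rangle$ immediately gives $M_g^{*}K_a=\overline{g(a)}K_a$; taking norms yields $|g(a)|\le \|M_g^{*}\|=\|M_g\|$ for every $a\in \D$, and hence $\|g\|_{\infty}\le \|M_g\|$. For the Carleson property I would rearrange the product rule as $g'f=(gf)'-gf'$ and compute
$$
\int_{\D}|f(z)|^{2}\,d\mu_g(z)=\int_{\D}|g'(z)f(z)|^{2}(1-|z|^{2})^{p}\,dm(z)\le 2\|M_g f\|_{\Di_p}^{2}+2\|g\|_{\infty}^{2}\|f\|_{\Di_p}^{2}\lesssim \|M_g\|^{2}\|f\|_{\Di_p}^{2},
$$
valid for every $f\in \Di_p$, so $d\mu_g$ is indeed a $\Di_p$-Carleson measure and $g\in \W_p$.

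The computations above are elementary once one works from the embedding definition of $\Di_p$-Carleson measures rather than from an intrinsic geometric description; the whole argument rests on nothing more than the product rule and the triangle inequality in $L^{2}((1-|z|^2)^p\,dm)$. The only step that uses structural information about $\Di_p$ beyond its norm is the reproducing kernel argument for $\|g\|_{\infty}$. The real obstacle, from the point of view of actually using the theorem, is not this proof but rather providing a usable intrinsic description of the Carleson class $\W_p$; that is precisely the content of Stegenga's capacity condition in \cite{St} and of the Arcozzi--Rochberg--Sawyer-type condition recalled in (\ref{GPCM}).
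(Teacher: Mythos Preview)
Your argument is correct. Note, however, that the paper does not give its own proof of this statement: Theorem~\ref{STG} is quoted from Stegenga~\cite{St} as background, so there is no in-paper proof to compare against. What you have written is the standard functional-analytic derivation that works directly from the \emph{embedding} definition of a $\Di_p$-Carleson measure (the inequality $\int_{\D}|f|^2\,d\mu\le C\|f\|_{\Di_p}^2$), together with the reproducing-kernel identity $M_g^{*}K_a=\overline{g(a)}K_a$ for the $H^\infty$ bound. Stegenga's original contribution in~\cite{St} is not this equivalence, which is soft, but the geometric description of the class of $\Di_p$-Carleson measures via capacities; you acknowledge this correctly in your closing remark. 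In short: the proposal is sound, and it supplies the routine step that the paper simply attributes to the literature.
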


 On the other hand  the multipliers of $\Q_p$ are completely described in \cite{PP}, \cite{Xi2} as follows.
 \begin{other}\label{PPX} Suppose $0<p<1$ and  $g\in\hol(\D)$. Then $g\in M(\Q_p)$
 if and only if $g\in H^{\infty}$  and
 \begin{equation}\label{LOGCM}
 \sup_{I\subseteq\mathbb T} \frac{\left(\log\frac{1}{|I|}\right)^2}{|I|^{ p}}\int_{S(I)}|g'(z)|^2 (1-|z|^2)^{ p}\,dm(z) <\infty .
 \end{equation}
 \end{other}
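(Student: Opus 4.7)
The plan is to work through the Carleson-measure description $f\in\Q_p$ iff $d\mu_f(z):=|f'(z)|^2(1-|z|^2)^p\,dm(z)$ is a $p$-Carleson measure, the product rule $(gf)'=g'f+gf'$, and the $BMOA$-growth $|f(z)|\lesssim \|f\|_{\Q_p}\log\frac{2}{1-|z|^2}$ that follows from the inclusion $\Q_p\subset BMOA$ (valid for $0<p<1$). Together these reduce matters to separately controlling two families of $p$-Carleson-type integrals on the boxes $S(I)$, which is exactly the level where (\ref{LOGCM}) is tailored to intervene.

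For sufficiency, assume $g\in H^\infty$ satisfies (\ref{LOGCM}) and take $f\in\Q_p$. Since $g$ is bounded,
$$\frac{1}{|I|^p}\int_{S(I)}|g(z)f'(z)|^2(1-|z|^2)^p\,dm(z)\leq \|g\|_\infty^2\|f\|_{\Q_p}^2$$
uniformly in $I\subset\T$. For the $g'f$ term, fix $I$ with midpoint $\zeta$, set $a_I=(1-|I|)\zeta$, and split $f(z)=(f(z)-f(a_I))+f(a_I)$. The constant piece contributes $|f(a_I)|^2\int_{S(I)}|g'|^2(1-|z|^2)^p\,dm$, and combining $|f(a_I)|\lesssim \|f\|_{\Q_p}\log\frac{1}{|I|}$ with (\ref{LOGCM}) produces the bound $|I|^p\|f\|_{\Q_p}^2$. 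The oscillation piece $\int_{S(I)}|g'|^2|f-f(a_I)|^2(1-|z|^2)^p\,dm$ is the delicate one: after the substitution $z=\varphi_{a_I}(w)$ it becomes an integral of $|F(w)-F(0)|^2$, with $F=f\circ\varphi_{a_I}$ satisfying $\|F-F(0)\|_{\Di_p}\leq\|f\|_{\Q_p}$, paired against a pulled-back measure. One then uses that the logarithmic $p$-Carleson condition on $|g'|^2(1-|z|^2)^p\,dm$ forces it to be a $\Di_p$-Carleson measure (in the sense preceding Theorem \ref{STG}), which closes the estimate via $\|F-F(0)\|_{\Di_p}^2\le \|f\|_{\Q_p}^2$.

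For necessity, assume $M_g$ is bounded on $\Q_p$. That $g\in H^\infty$ is inherited from the inclusion $\Q_p\subset BMOA$ together with Stegenga's observation $M(BMOA)\subset H^\infty$, or proved directly by testing on $\Q_p$-peak functions at arbitrary points in $\D$. To extract (\ref{LOGCM}), use the logarithmic test family $f_a(z)=\log\frac{2}{1-\overline{a}z}$, $a\in\D$, which lies in $\Q_p$ with $\sup_{a\in\D}\|f_a\|_{\Q_p}<\infty$. Boundedness of $M_g$ gives $\|gf_a\|_{\Q_p}\lesssim 1$, and applying the Carleson characterization to $gf_a$ via $(gf_a)'=g'f_a+gf_a'$, while absorbing the $gf_a'$ contribution using $g\in H^\infty$ and $\|f_a\|_{\Q_p}\lesssim 1$, yields for each arc $I$ with $a:=a_I$
$$\frac{1}{|I|^p}\int_{S(I)}|g'(z)|^2|f_{a_I}(z)|^2(1-|z|^2)^p\,dm(z)\lesssim 1.$$
Since $|1-\overline{a_I}z|\sim |I|$ on $S(I)$, one has $|f_{a_I}(z)|\gtrsim \log\frac{1}{|I|}$ there, and (\ref{LOGCM}) drops out.

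The main obstacle is the oscillation estimate in the sufficiency direction, more precisely the implication that a logarithmic $p$-Carleson measure (together with a bound on $g$) is necessarily a $\Di_p$-Carleson measure. Establishing this implication requires invoking the Stegenga capacity criterion or the Arcozzi--Rochberg--Sawyer condition (\ref{GPCM}) in a nontrivial way, and is the technical core of the theorem; everything else is bookkeeping around the product rule and the two Carleson-measure dictionaries.
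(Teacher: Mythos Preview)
The paper does not prove this statement at all: Theorem~\ref{PPX} is quoted from the literature (Pau--Pel\'aez \cite{PP} and Xiao \cite{Xi2}) without argument, so there is no ``paper's own proof'' to compare against. Your outline is broadly in the spirit of those references, and in particular the necessity direction via the test family $f_a(z)=\log\frac{2}{1-\bar a z}$, together with the reverse triangle inequality to isolate the $g'f_a$ term, is the standard route.

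Two remarks on your sketch. First, in the necessity direction your sentence ``$g\in H^\infty$ is inherited from the inclusion $\Q_p\subset BMOA$ together with $M(BMOA)\subset H^\infty$'' is a non sequitur: an inclusion $X\subset Y$ does \emph{not} imply $M(X)\subset M(Y)$, so this step as written is wrong. Your alternative (test on reproducing-type functions, or use that point evaluations are bounded on $\Q_p$ to obtain $|g(z)|\le\|M_g\|$) is the correct justification.

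Second, in the sufficiency direction you locate the difficulty in showing that the logarithmic $p$-Carleson condition on $d\mu_g=|g'|^2(1-|z|^2)^p\,dm$ forces $d\mu_g$ to be a $\Di_p$-Carleson measure, and then use this to close the oscillation estimate. Note that the paper itself records the inclusion $\Q_{p,\log}\subset\W_p$ just after Theorem~\ref{PPX} and calls it ``not difficult to check'', so you may be overstating the depth here. More importantly, after the change of variable $z=\varphi_{a_I}(w)$ the measure you must integrate $|F(w)-F(0)|^2$ against is a \emph{pull-back} of $d\mu_g$, not $d\mu_g$ itself, so simply knowing $d\mu_g$ is $\Di_p$-Carleson does not immediately apply. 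The route taken in \cite{PP} (and reused in this paper's proof of Theorem~\ref{ThmOp}) avoids the substitution and instead applies the $\Di_p$-Carleson property to the function $z\mapsto \frac{f(z)-f(w)}{(1-\bar w z)^p}$ directly, then estimates its $\Di_p$ norm via a reproducing formula from \cite{RW}. Your outline is morally correct but glosses over this mechanism.
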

Thus if we denote by $\Q_{p, \log}$ the space of functions that satisfy (\ref{LOGCM}) then the above theorem says
$$
M(\Q_p)= H^{\infty}\cap \Q_{p, \log}.
$$
It is not difficult to check that $\Q_{p, \log}\subset \W_p$. On the other hand it was shown in
\cite{ARS} that $\W_p\subset \Q_p$  and there is a simplified proof of this  in  \cite[Lemma 4]{LLZ}. Thus we have
$$
\Q_{p, \log}\subset \W_p\subset \Q_p, \quad 0<p<1.
$$

 In what follows we study the action of the operators $I_g, J_g$ on the spaces ${\Di_p^{\la}} $, and obtain
 information on pointwise multipliers. We will need  the following technical lemma from \cite{PZ} (p. 488). We state only
 the part of it that we need.
\begin{otherl} \label{PZh}
Let $u\in \D$, $|v|\le 1$ and $s>-1,\, r,\,t >0$. Then
\begin{align*}
\int_{\D} &\frac{(1-|z|^2)^s}{|1-\bar{u}z|^r|1-\bar{v}z|^t}\,dm(z)\leq \frac{C}{(1-|u|^2)^{r+t-s-2}}\,, \quad 0<r+t-s-2 <r\,
\end{align*}
where $C$ is an absolute, positive constant.
\end{otherl}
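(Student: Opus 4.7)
The estimate is a two-kernel extension of the classical Forelli--Rudin bound
$$\int_\D \frac{(1-|z|^2)^\alpha}{|1-\bar u z|^\beta}\,dm(z)\sim \frac{1}{(1-|u|^2)^{\beta-\alpha-2}}\qquad(\alpha>-1,\ \beta>\alpha+2),$$
so the plan is to reduce the two-kernel integral to this one-kernel version via a case analysis on $v$ and a splitting of $\D$.

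When $|v|\le 1/2$, $|1-\bar v z|\ge 1/2$ on $\D$, so $|1-\bar v z|^{-t}\le 2^t$ and the result follows at once from the one-kernel estimate (using $1-|u|^2\le 1$ and $t>0$ to absorb any extra exponent).

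For $1/2<|v|\le 1$ I would split
$$E=\{z\in\D:|1-\bar v z|\ge\tfrac12|1-\bar u z|\},\qquad E^c=\D\setminus E.$$
On $E$ the inequality $|1-\bar v z|^{-t}\le 2^t|1-\bar u z|^{-t}$ merges the two kernels into one of exponent $r+t>s+2$, and the one-kernel estimate immediately yields $(1-|u|^2)^{-(r+t-s-2)}$ on this piece.

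The core of the argument is the estimate on $E^c$. There $|1-\bar u z|>2|1-\bar v z|$, and the key geometric fact---obtained from $|v-z|\le|1-\bar v z|$ (itself a consequence of $|1-\bar v z|^2-|v-z|^2=(1-|v|^2)(1-|z|^2)\ge 0$) combined with $|1-\bar u z|\le|1-\bar u v|+|v-z|$---is that $|1-\bar u z|\le 2|1-\bar u v|$ on all of $E^c$. I would split $E^c$ further into $E^c_1=\{|1-\bar v z|<|1-\bar u v|/2\}\cap E^c$ and $E^c_2=E^c\setminus E^c_1$. On $E^c_1$ the triangle inequality gives $|1-\bar u z|\ge|1-\bar u v|/2$, so the $u$-kernel pulls out of the integral as $|1-\bar u v|^{-r}$, and the remaining $\int (1-|z|^2)^s|1-\bar v z|^{-t}\,dm$ over the disc $\{|1-\bar v z|<|1-\bar u v|/2\}$ is of order $|1-\bar u v|^{s-t+2}$ by polar coordinates around $v$; the convergence at the boundary singularity requires exactly the hypothesis $t<s+2$ (i.e.\ $r+t-s-2<r$). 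On $E^c_2$ both $|1-\bar u z|$ and $|1-\bar v z|$ are comparable to $|1-\bar u v|$, so the integrand is of order $|1-\bar u v|^{-(r+t)}$ while the weighted area $\int_{E^c_2}(1-|z|^2)^s\,dm$ is of order $|1-\bar u v|^{s+2}$, again contributing $|1-\bar u v|^{-(r+t-s-2)}$. The proof finishes by observing $|1-\bar u v|\ge 1-|u|\sim 1-|u|^2$, which upgrades the bound to the desired $(1-|u|^2)^{-(r+t-s-2)}$. The main technical point is the polar-coordinate computation at the boundary singularity, where the upper bound $t<s+2$ appears precisely as the required integrability condition.
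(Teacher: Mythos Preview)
The paper does not actually prove this lemma; it is quoted verbatim from \cite{PZ} (page 488) and used as a black box. So there is no ``paper's proof'' to compare against.

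Your argument is the standard route to this kind of two--kernel Forelli--Rudin estimate and is essentially correct. A few places deserve a little more care. First, in the case $|v|\le 1/2$ the one--kernel integral $\int_\D (1-|z|^2)^s|1-\bar u z|^{-r}\,dm(z)$ need not blow up like $(1-|u|^2)^{-(r-s-2)}$, since the hypotheses do not force $r>s+2$; you should say explicitly that if $r\le s+2$ the integral is bounded (or has at worst logarithmic growth), and then use $r+t-s-2>0$ to absorb this into the desired bound. Second, on $E^c_1$ your ``polar coordinates around $v$'' is really polar coordinates around $1/\bar v$, and for $-1<s<0$ the crude bound $(1-|z|^2)^s\lesssim |1-\bar v z|^s$ fails; what you actually need is the weighted--area estimate
\[
\int_{\{|1-\bar v z|<\delta\}\cap\D}(1-|z|^2)^s\,dm(z)\lesssim \delta^{s+2}\qquad(s>-1,\ |v|\le 1),
\]
together with a dyadic decomposition in $|1-\bar v z|$ (the geometric series converges precisely because $t<s+2$). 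The same weighted--area bound is what makes your estimate on $E^c_2$ work. With these clarifications the proof goes through.
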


Using this estimate we obtain a family of test functions in  ${\Di_p^{\la}}$.

\begin{lem}\label{LemFunc}
Let $0< p,\la<1$ and $c\in\D$. Then the functions
\begin{equation}\label{f2}
 f_c(z)=\frac{1}{(1-\overline{c}z)^{ p(1-\la)/2}}
\end{equation}
belong to $ \Di_p^{\la}$ and $K=\sup_{c \in \D}\|f_c\|_{{\Di_p^{\la}}}< \infty .$
 \end{lem}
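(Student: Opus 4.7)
The plan is to work directly with the definition \eqref{DIRMORNORM} of the Dirichlet-Morrey norm rather than routing through the Carleson measure characterization, because the derivative of $f_c$ is an explicit power of $(1-\bar c z)$, and combined with the change of variables from $\varphi_a$ this produces precisely the type of two-point integral that Lemma \ref{PZh} is designed to handle.

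First I would compute $|f_c(0)|=1$ and
\[
|f_c'(z)|^2\leq C_{p,\la}\,\frac{1}{|1-\overline{c}z|^{2+p(1-\la)}},\qquad z\in\D.
\]
Then, using the standard identity
\[
1-|\varphi_a(z)|^2=\frac{(1-|a|^2)(1-|z|^2)}{|1-\overline{a}z|^2},
\]
I would rewrite
\[
\n{f_c\circ\varphi_a-f_c(a)}_{\Di_p}^2
=\int_{\D}|f_c'(z)|^2(1-|\varphi_a(z)|^2)^p\,dm(z)
=(1-|a|^2)^p\!\int_{\D}\frac{|f_c'(z)|^2(1-|z|^2)^p}{|1-\overline{a}z|^{2p}}\,dm(z),
\]
so that the pointwise bound on $|f_c'|^2$ reduces the problem to controlling
\[
\mathcal{I}(a,c)=\int_{\D}\frac{(1-|z|^2)^p}{|1-\overline{a}z|^{2p}\,|1-\overline{c}z|^{2+p(1-\la)}}\,dm(z).
\]

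The key step is to apply Lemma \ref{PZh} with $u=a$, $v=c$, $s=p$, $r=2p$, and $t=2+p(1-\la)$. I would check the hypothesis: one has $r+t-s-2=p(2-\la)$, and because $0<\la<1$ this lies strictly between $p$ and $2p$, so the required inequality $0<r+t-s-2<r$ holds. The lemma then gives
\[
\mathcal{I}(a,c)\leq \frac{C}{(1-|a|^2)^{p(2-\la)}},
\]
uniformly in $c\in\D$. Plugging this back in,
\[
\n{f_c\circ\varphi_a-f_c(a)}_{\Di_p}^2\leq C_{p,\la}(1-|a|^2)^p\cdot\frac{1}{(1-|a|^2)^{p(2-\la)}}=\frac{C_{p,\la}}{(1-|a|^2)^{p(1-\la)}}.
\]
Multiplying by $(1-|a|^2)^{p(1-\la)}$ and taking the square root produces a bound on $(1-|a|^2)^{p(1-\la)/2}\n{f_c\circ\varphi_a-f_c(a)}_{\Di_p}$ that is independent of both $a$ and $c$, so $\n{f_c}_{\Di_p^\la}\leq K$ with $K=K(p,\la)$.

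I do not expect any real obstacle beyond book-keeping: the only delicate point is verifying the range condition $0<r+t-s-2<r$ in Lemma \ref{PZh}, since this is exactly what uses the assumption $0<\la<1$ and pins down why $f_c$ with the specific exponent $p(1-\la)/2$ is the right test function. (An alternative route would be to apply Proposition \ref{Carl-DM} and test the Carleson box integral directly, much as in the proof of part (ii) of the previous proposition, but the route above makes the uniformity in $c$ transparent with a single application of the cited lemma.)
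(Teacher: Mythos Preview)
Your proposal is correct and is essentially the paper's own proof: both reduce to the same two-point integral and apply Lemma~\ref{PZh} with the identical parameters $s=p$, $r=2p$, $t=2+p(1-\la)$, yielding $r+t-s-2=p(2-\la)\in(0,2p)$. Your write-up is just a more detailed unpacking of the same computation.
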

\begin{proof}
Fix  $c\in\D$. Then for $a\in\D$,

\begin{align*}
&(1-|a|^2)^{p(1-\la)}\int_\D |f'_c(z)|^2(1-|\vb(z)|^2)^ p\,dm(z)\\
& = (1-|a|^2)^{ p(2-\la)}
\int_\D \frac{(1-|z|^2)^ p}{|1-\overline{c}z|^{2+ p(1-\la)}|1-\overline{a}z|^{2 p}}\,dm(z)\,.
\end{align*}
Now for $r=2 p,\, t=2+ p(1-\la),\, s= p$,
Lemma \ref{PZh} gives the desired result.
\end{proof}

\begin{theorem}\label{thmIg}
Let $0< p,\la<1$ and $g\in\hol(\D)$. Then $I_g:{\Di_p^{\la}}\to{\Di_p^{\la}}$
 is bounded if and only if  $g\in H^\infty$.
\end{theorem}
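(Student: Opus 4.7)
The plan is to convert the question into an integral identity via conformal change of variables, exactly as in the proof of Proposition \ref{Carl-DM}, and then read off the two directions from it.  Since $(I_g f)'(z) = f'(z)g(z)$ and $(I_g f)(0)=0$, the same substitution used there yields, for every $a\in\D$,
\[
\n{I_g f \circ \vb - I_g f(a)}_{\Di_p}^2 \;=\; \int_\D |f'(w)|^2\,|g(w)|^2\,\bigl(1-|\vb(w)|^2\bigr)^p\,dm(w).
\]
If $g\in H^{\infty}$, pulling $\n{g}_\infty^2$ out of the right-hand side, multiplying by $(1-|a|^2)^{p(1-\la)}$ and taking the supremum over $a\in\D$ recovers $\n{f}_{\Di_p^\la}^2$ on the right; this gives $\n{I_g f}_{\Di_p^\la}\le \n{g}_\infty\,\n{f}_{\Di_p^\la}$, which handles the sufficiency.

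For the necessity I would test on the family $f_c(z)=(1-\bar c z)^{-p(1-\la)/2}$, $c\in\D$, whose $\Di_p^\la$-norms are uniformly bounded by a constant $K$ by Lemma \ref{LemFunc}.  Boundedness of $I_g$ then gives $\n{I_g f_c}_{\Di_p^\la}\le K\,\n{I_g}$ uniformly in $c$.  Now I apply the pointwise derivative estimate
\[
|h'(w)| \;\lesssim\; \frac{\n{h}_{\Di_p^\la}}{(1-|w|^2)^{1+\tfrac{p}{2}(1-\la)}},\qquad w\in\D,
\]
which appears inside the proof of the growth proposition for $\Di_p^\la$ (the second proposition of Section~2), to $h=I_g f_c$ at the diagonal point $w=c$.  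A direct differentiation gives $|f_c'(c)|=\tfrac{p(1-\la)}{2}\,|c|\,(1-|c|^2)^{-1-p(1-\la)/2}$, so the $(1-|c|^2)$ factors on both sides cancel and one is left with the clean bound $|c|\,|g(c)|\lesssim \n{I_g}$.  This bounds $g$ uniformly on $\{|c|\ge 1/2\}$, and the maximum principle on $\{|c|\le 1/2\}$ takes care of the remaining disc, yielding $g\in H^{\infty}$.

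The only genuinely delicate point is the calibration of exponents: the power $p(1-\la)/2$ in the test functions $f_c$ is chosen precisely so that the growth of $|f_c'(c)|$ at the diagonal matches the universal growth of $|h'(c)|$ for $h\in\Di_p^\la$.  Once that cancellation is observed, both implications reduce to applications of results already established earlier in the excerpt, and no further obstacle should arise.
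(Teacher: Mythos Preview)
Your argument is correct.  The sufficiency direction is essentially identical to the paper's: the paper phrases it through the Carleson-box norm of Proposition~\ref{Carl-DM} while you use the equivalent M\"obius-translate norm, but the substance is the same one-line estimate $|f'g|^2\le\|g\|_\infty^2|f'|^2$ inside the defining integral.

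For the necessity both you and the paper test on the family $f_c$ of Lemma~\ref{LemFunc}, but the extraction of $|g(c)|$ differs.  The paper takes the defining integral for $\n{I_gf_c}_{\Di_p^\la}^2$ at $a=c$, restricts it to the small disc $\{|z-c|<\tfrac{1-|c|}{2}\}$, and invokes the sub-mean-value property of $|g|^2$ to pull out $|g(c)|^2$; the factors of $(1-|c|)$ then collapse.  You instead apply the pointwise derivative bound $|h'(w)|\lesssim (1-|w|^2)^{-1-p(1-\la)/2}\n{h}_{\Di_p^\la}$ (from the proof of the growth proposition) to $h=I_gf_c$ at $w=c$, so that the computed $|f_c'(c)|$ cancels the weight exactly.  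Your route is a bit more economical because it recycles a result already proved in Section~2 rather than redoing a local subharmonic estimate; the paper's route is self-contained and makes visible where the subharmonicity enters.  Either way the exponent calibration you flag as the delicate point is the same in both proofs.
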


\begin{proof}
Let $g\in H^\infty$ then
\begin{align*}
\|I_g(f)\|_{\Di_p^{\la}}^2 &\sim \sup\limits_{I\subset\T}
 \frac{1}{|I|^{ p\la}}\int_{S(I)} |I_g(f)^\prime(z)|^2 (1-|z|^2)^ p\,dm(z)\\
& = \sup\limits_{I\subset\T}\frac{1}{|I|^{ p\la}}\int_{S(I)} |f^\prime(z)|^2|g(z)|^2(1-|z|^2)^ p\,dm(z)\\
 &\lesssim \|g\|_\infty^2\|f\|_{\Di_p^{\la}}^2
\end{align*}
for every $ f\in{\Di_p^{\la}}$. So $\|I_g\|\le C\|g\|_\infty$ where $C$ is a constant.

\par On the other hand, assume that $I_g$ is bounded on ${\Di_p^{\la}}$. We will use the test
functions $f_c$ of Lemma \ref{LemFunc} for $\{|c|> \frac 12\}$. Then
from the Lemma there is a constant $C$ such that $1\leq \n{f_c}_{\Di_p^{\la}}\leq C$
for all $c$, so that $ \|I_g\|^2 \geq \frac{1}{C^2}\n{I_g(f_c)}_{\Di_p^{\la}}^2$ and,
\begin{align*}
\n{I_g(f_c)}_{\Di_p^{\la}}^2&=
\sup\limits_{a\in\D}\,(1-|a|^2)^{ p(1-\la)}\int_\D |I_g(f_c)^\prime(z)|^2(1-|\varphi_a(z)|^2)^ p\, dm(z)\\
& \gtrsim   (1-|c|^2)^{ p(1-\la)}\int_\D |I_g(f_c)^\prime(z)|^2(1-|\varphi_c(z)|^2)^ p\, dm(z)\\
 &= (1-|c|^2)^{ p(1-\la)}\int_\D |f_c^\prime(z)|^2|g(z)|^2(1-|\varphi_c(z)|^2)^ p\, dm(z)\\
&\sim
|c| (1-|c|^2)^{ p(1-\la)}\int_\D \frac{|g(z)|^2(1-|\varphi_c(z)|^2)^ p}{\left|1-\bar{c}z\right|^{2+p(1-\la)}}\, dm(z)\,,
\end{align*}
now by restricting the above integral on a disc with center the point $c$ and radius $\frac{1-|c|}{2}$
and by applying  the mean value property of subharmonic functions we get
that
\begin{align*}
\|I_g\|^2 & \gtrsim |g(c)|^2
\end{align*}
for any $\{|c|> \frac 12\}$. It follows  that $g$ is a bounded analytic function on $\D$.
\end{proof}

\par Concerning  the action of  $J_g$ on ${\Di_p^{\la}}$ we have the following necessary condition.
\begin{theorem}\label{NJg}
Let $0< p,\la<1$ and $g\in\hol(\D)$. If $ J_g : {\Di_p^{\la}} \to {\Di_p^{\la}} $
is bounded then $g \in \Q_p$.
\end{theorem}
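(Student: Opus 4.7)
The plan is to reduce the boundedness of $J_g$ to the Carleson measure characterization of $\Q_p$ by plugging the test functions $f_c$ of Lemma \ref{LemFunc} into Proposition \ref{Carl-DM}, with $c$ chosen to depend on the Carleson box. Since $(J_g f)'(z)=f(z)g'(z)$ and $(J_g f)(0)=0$, Proposition \ref{Carl-DM} gives
$$
\|J_g f\|_{\Di_p^{\la}}^2 \,\sim\, \sup_{I\subset\T} \frac{1}{|I|^{p\la}}\int_{S(I)}|f(z)|^2|g'(z)|^2(1-|z|^2)^p\,dm(z)
$$
for every $f\in\Di_p^{\la}$. Boundedness of $J_g$ therefore produces the uniform inequality
$$
\frac{1}{|I|^{p\la}}\int_{S(I)}|f(z)|^2|g'(z)|^2(1-|z|^2)^p\,dm(z)\,\lesssim\,\|J_g\|^2\|f\|_{\Di_p^{\la}}^2,\qquad I\subset\T.
$$

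Next I would fix an arbitrary arc $I\subset\T$, let $\zeta$ be its midpoint and set $c=c_I=(1-|I|)\zeta\in\D$. By Lemma \ref{LemFunc}, $\|f_c\|_{\Di_p^{\la}}\le K$ uniformly in $c$. The key geometric observation is that for $z\in S(I)$,
$$
|1-\overline{c}z|\,\sim\, |I|,
$$
which is a routine estimate analogous to the one used in the proof of Proposition \ref{Carl-DM}. Consequently,
$$
|f_c(z)|^2=\frac{1}{|1-\overline{c}z|^{p(1-\la)}}\,\sim\,\frac{1}{|I|^{p(1-\la)}},\qquad z\in S(I).
$$
Substituting $f=f_c$ into the displayed inequality above and using this lower bound on $S(I)$ yields
$$
\frac{1}{|I|^{p\la}}\cdot\frac{1}{|I|^{p(1-\la)}}\int_{S(I)}|g'(z)|^2(1-|z|^2)^p\,dm(z)\,\lesssim\,\|J_g\|^2 K^2,
$$
that is,
$$
\frac{1}{|I|^{p}}\int_{S(I)}|g'(z)|^2(1-|z|^2)^p\,dm(z)\,\lesssim\,\|J_g\|^2 K^2.
$$
Since this bound is uniform in $I\subset\T$, the Carleson measure characterization of $\Q_p$ recalled in the Introduction gives $g\in\Q_p$, as required.

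I do not anticipate any serious obstacle. The only point that requires some care is the comparability $|1-\overline{c}z|\sim|I|$ on $S(I)$ for the specific Carleson-box center $c=(1-|I|)\zeta$, but this is exactly the auxiliary estimate already exploited in the proof of Proposition \ref{Carl-DM}. The structural reason the argument works is that raising $f_c$ to the power matching $p(1-\la)$ absorbs precisely the factor $|I|^{p(1-\la)}$ needed to turn the $|I|^{p\la}$ weight of the $\Di_p^{\la}$-Carleson condition into the $|I|^p$ weight of the $\Q_p$-Carleson condition.
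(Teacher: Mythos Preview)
Your proposal is correct and follows essentially the same approach as the paper: you use the test functions $f_c$ of Lemma~\ref{LemFunc}, choose $c=c_I=(1-|I|)\zeta$ adapted to each Carleson box, exploit $|1-\overline{c}z|\sim|I|$ on $S(I)$ to convert the $|I|^{p\la}$-weight into the $|I|^{p}$-weight, and conclude via the Carleson measure characterization of $\Q_p$. The paper's proof is organized identically, so there is nothing to add.
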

\begin{proof}
We use the test functions $f_c(z)=(1-\overline{c}z)^{-p(1-\la)/2}$ of Lemma \ref{LemFunc}.
From the hypothesis there is a constant $C$ such that
$$
\n{J_g(f_c)}_{\Di_p^{\la}}\leq C\n{f_c}_{\Di_p^{\la}}\leq C\sup_{c\in\D}\n{f_c}_{\Di_p^{\la}}=CK<\infty,
$$
for all $c\in\D$. This means that
$$
\sup\limits_{I\subset\T} \frac{1}{|I|^{ p\la}}\int_{S(I)} |f_c(z)|^2|g^\prime(z)|^2(1-|z|^2)^ p\,dm(z)\leq K'<\infty
$$
for all $c\in\D$. For each interval  $I$   choose $c=c_I=(1-|I|)\eiteta$ where $\eiteta$ is the center of $I$, then
$ |1-\bar{c}z|\sim |I|$ for $ z\in S(I)$ and we have
\begin{align*}
K'\geq &\frac{1}{|I|^{ p\la}}\int_{S(I)} \frac{1}{ |1-\overline{c}z|^{p(1-\la)}}|g^\prime(z)|^2(1-|z|^2)^ p\,dm(z) \\
\sim&\frac{1}{|I|^{ p}}\int_{S(I)} |g'(z)|^2(1-|z|^2)^ p\,dm(z)
\end{align*}
with $K'$ independent of $I$. Taking the supremum of the last integral over all $I\subset\T$ we see that $g\in \Q_p$.
\end{proof}

\par We now find  sufficient conditions on $g$ for  $J_g$ to be bounded on $\Di_p^{\la}$.

\begin{theorem}\label{ThmOp} Suppose $0<p<1$.
\begin{enumerate}
\item If $0<q<p$ and $g\in \Q_{q}$ then $J_g: \Di_p^{q/p}\to \Di_p^{q/p} $ is bounded.\\
\item If $0<\la<1$ and $g\in \W_p$ then   $J_g:{\Di_p^{\la}}\to{\Di_p^{\la}}$ is bounded.
\end{enumerate}
\end{theorem}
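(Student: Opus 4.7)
My plan is to apply the Carleson-measure characterization of Proposition~\ref{Carl-DM}: since $(J_g f)'=fg'$, the statement $J_g(f)\in\Di_p^\la$ is equivalent to $|f(z)|^2|g'(z)|^2(1-|z|^2)^p\,dm(z)$ being a $p\la$-Carleson measure, and I will bound its Carleson norm by $\|f\|_{\Di_p^\la}^2$ in each case.

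Part~(1) is the easy case. With $\la=q/p$ so that $p\la=q$ and $\frac{p}{2}(1-\la)=\frac{p-q}{2}$, I split the weight as $(1-|z|^2)^p=(1-|z|^2)^{p-q}(1-|z|^2)^q$. The growth estimate \eqref{growth} applied to $f\in\Di_p^{q/p}$ gives $|f(z)|^2(1-|z|^2)^{p-q}\lesssim\|f\|_{\Di_p^{q/p}}^2$ pointwise on $\D$, while the remaining measure $|g'(z)|^2(1-|z|^2)^q\,dm(z)$ is $q$-Carleson because $g\in\Q_q$. The desired bound is then immediate.

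Part~(2) is the substantive case. Fix $I\subset\T$ and put $a=a_I=(1-|I|)\zeta_I$, so that $|1-\bar a z|\sim 1-|a|\sim|I|$ on $S(I)$. Decompose $|f(z)|^2\lesssim|f(a)|^2+|f(z)-f(a)|^2$. The constant contribution $|f(a)|^2\mu_g(S(I))$ is controlled by combining the growth bound $|f(a)|^2\lesssim\|f\|_{\Di_p^\la}^2|I|^{-p(1-\la)}$ with $\mu_g(S(I))\lesssim|I|^p$, which follows from $\W_p\subset\Q_p$; the product is exactly $|I|^{p\la}\|f\|_{\Di_p^\la}^2$. For the variable contribution I introduce the analytic function $\Phi_a(z)=\frac{1-|a|^2}{1-\bar a z}$, note that $|\Phi_a(z)|^{2p}\sim 1$ on $S(I)$, and write
\begin{equation*}
\int_{S(I)}|f(z)-f(a)|^2\,d\mu_g(z)\lesssim\int_\D|H_a(z)|^2\,d\mu_g(z),\qquad H_a(z):=(f(z)-f(a))\Phi_a(z)^p.
\end{equation*}
Since $g\in\W_p$ forces $\mu_g$ to be $\Di_p$-Carleson, the right-hand side is $\lesssim\|H_a\|_{\Di_p}^2$, and the task reduces to $\|H_a\|_{\Di_p}^2\lesssim|I|^{p\la}\|f\|_{\Di_p^\la}^2$.

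To prove the last inequality I differentiate $H_a$ by the Leibniz rule and split $|H_a'|^2$ into two pieces. The first,
\begin{equation*}
\int_\D|f'(z)|^2|\Phi_a(z)|^{2p}(1-|z|^2)^p\,dm(z)=(1-|a|^2)^p\|f\circ\varphi_a-f(a)\|_{\Di_p}^2,
\end{equation*}
is controlled directly by the definition of $\|f\|_{\Di_p^\la}^2$ and yields $(1-|a|^2)^{p\la}\sim|I|^{p\la}$. The second piece is essentially $\int|f(z)-f(a)|^2(1-|a|^2)^{2p}(1-|z|^2)^p|1-\bar a z|^{-2p-2}\,dm(z)$ and is the main obstacle. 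I plan to dispatch it by $|f-f(a)|^2\leq 2|f|^2+2|f(a)|^2$, then apply the growth estimate $|f(z)|^2\lesssim\|f\|_{\Di_p^\la}^2(1-|z|)^{-p(1-\la)}$, and invoke the Forelli--Rudin type estimate of Lemma~\ref{PZh} (with $s=p\la$, $r=2p+2$, $t=0$, so that $r-s-2=p(2-\la)>0$) to obtain an integral bound $\lesssim(1-|a|^2)^{-p(2-\la)}$; multiplying by the prefactor $(1-|a|^2)^{2p}$ returns $(1-|a|^2)^{p\la}\sim|I|^{p\la}$. A routine crude estimate on $|H_a(0)|^2$ closes the argument.
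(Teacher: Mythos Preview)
Your proof is correct and follows the same overall architecture as the paper: Part~(1) is handled identically via the growth estimate~\eqref{growth} and the $\Q_q$ Carleson condition, and in Part~(2) both you and the authors split $f=f(a)+(f-f(a))$ on $S(I)$, control the constant piece by $\W_p\subset\Q_p$, insert the factor $(1-\bar az)^{-p}$ to invoke the $\Di_p$-Carleson property of $\mu_g$, and then differentiate via Leibniz.

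The genuine difference is in the treatment of the second Leibniz term, i.e.\ the integral
\[
\int_\D\frac{|f(z)-f(a)|^2(1-|z|^2)^p}{|1-\bar az|^{2p+2}}\,dm(z).
\]
The paper performs a change of variable $z\mapsto\varphi_a(z)$ and then appeals to a reproducing-formula argument from \cite{PP} (via \cite{RW}), together with Cauchy--Schwarz and Fubini, to dominate this by $\int_\D|(f\circ\varphi_a)'(z)|^2(1-|z|^2)^p\,dm(z)$, which is already the $\Di_p^\la$ quantity. Your route is considerably more elementary: you simply use $|f-f(a)|^2\le 2|f|^2+2|f(a)|^2$, feed in the pointwise growth bound~\eqref{growth}, and close with a single Forelli--Rudin estimate. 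This avoids the external machinery entirely. One small remark: Lemma~\ref{PZh} as stated requires $t>0$, so the case $t=0$ you invoke is not literally covered by it; but the required one-weight estimate $\int_\D(1-|z|^2)^{p\la}|1-\bar az|^{-2p-2}\,dm(z)\lesssim(1-|a|^2)^{-p(2-\la)}$ is the standard Forelli--Rudin bound (e.g.\ \cite[Lemma~3.10]{Zhu}), so this is only a citation adjustment, not a gap.
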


\begin{proof}
(1) Set $\la=q/p<1$ and suppose $I\subset\T$ is an interval.  Using the  growth condition (\ref{growth})  for  $f\in {\Di_p^{\la}}$ we have
\begin{align*}
&\frac{1}{|I|^{p\la}}\int_{S(I)} |J_g(f)'(z)|^2(1-|z|^2)^ p\,dm(z) \\
=& \frac{1}{|I|^{q}}\int_{S(I)} |f(z)|^2|g'(z)|^2(1-|z|^2)^ p\,dm(z)\\
\lesssim &\frac{1}{|I|^{q}}\int_{S(I)} \frac{1}{(1-|z|^2)^{ p(1-\la)}}|g'(z)|^2(1-|z|^2)^ p\,dm(z)
\,\|f\|^2_{{\Di_p^{\la}}}\\
= &\frac{1}{|I|^{q}}\int_{S(I)}
|g'(z)|^2(1-|z|^2)^{q}\,dm(z)\,\|f\|^2_{{\Di_p^{\la}}}\\
 \lesssim   &\|g\|^2_{\Q_q}\|f\|^2_{{\Di_p^{\la}}},
\end{align*}
and the assertion follows by taking supremum on the left.

(2)  Let  $f\in{\Di_p^{\la}}$. For an interval $I\subset\T$  let  $w=w_I=(1-|I|)\eiteta$
where $\eiteta$ is the center of $I$.  Then
\begin{align*}
 &\frac{1}{|I|^{ p\la}}\int_{S(I)} |J_g(f)^\prime(z)|^2(1-|z|^2)^ p\,dm(z) \\
=&\frac{1}{|I|^{ p\la}}\int_{S(I)} |f(z)|^2|g^\prime(z)|^2(1-|z|^2)^ p\,dm(z)\\
 \leq  &\frac{2}{|I|^{ p\la}}\int_{S(I)} |f(w)|^2|g^\prime(z)|^2(1-|z|^2)^ p\,dm(z)\\
 & + \frac{2}{|I|^{ p\la}}\int_{S(I)} |f(z)-f(w)|^2|g^\prime(z)|^2(1-|z|^2)^ p\,dm(z)\\
=& A_I+B_I.
\end{align*}
For the first integral, using (\ref{growth}) and recalling that $\W_p\subset\Q_p$ we have
\begin{align*}
 A_I  \lesssim \|f\|^2_{\Di_p^{\la}} \frac{1}{|I|^{p}}\int_{S(I)} |g'(z)|^2(1-|z|^2)^ p\,dm(z)\lesssim
  \|f\|^2_{\Di_p^{\la}}\|g\|^2_{\Q_p}.
\end{align*}
 For the second integral we write
\begin{align*}
B_I&=\frac{2}{|I|^{ p\la}}\int_{S(I)} \left|\frac{f(z)-f(w)}{(1-\overline{w}z)^ p} \right|^2
|1-\overline{w}z|^{2 p}|g'(z)|^2(1-|z|^2)^ p\,dm(z)\\
&\lesssim |I|^{ p(2-\la)} \int_{S(I)} \left|\frac{f(z)-f(w)}{(1-\overline{w}z)^ p} \right|^2 |g'(z)|^2(1-|z|^2)^ p\,dm(z)\\
&= (1-|w|)^{ p(2-\la)}\int_{S(I)} \left|\frac{f(z)-f(w)}{(1-\overline{w}z)^ p} \right|^2 |g'(z)|^2(1-|z|^2)^ p\,dm(z).\\
&\lesssim  (1-|w|)^{ p(2-\la)} |f(0)-f(w)|^2 \\
&\quad +(1-|w|)^{ p(2-\la)} \int_{\D} \left|\frac{d}{dz}\left(\frac{f(z)-f(w)}{(1-\overline{w}z)^ p}\right) \right|^2 (1-|z|^2)^ p\,dm(z)\\
&=(1-|w|)^{ p(2-\la)} |f(0)-f(w)|^2 +C_w,
\end{align*}
where we have used  the hypothesis that $d\mu_g(z)=|g'(z)|^2(1-|z|^2)^ p\,dm(z)$ is a $\Di_p$-Carleson measure.
The first term in the last sum is
$$
(1-|w|)^{ p(2-\la)} |f(0)-f(w)|^2<(1-|w|)^{ p(1-\la)} |f(0)-f(w)|^2\lesssim\|f\|^2_{\Di_p^{\la}}
$$
by using (\ref{growth}) once more. For the second term  we have
 \begin{align*}
C_w = &(1-|w|)^{ p(2-\la)} \int_{\D} \left|\frac{d}{dz}\left(\frac{f(z)-f(w)}{(1-\overline{w}z)^ p}\right) \right|^2 (1-|z|^2)^ p\,dm(z)\\
 &= (1-|w|)^{ p(2-\la)} \int_{\D} |f^\prime(z)|^2\frac{(1-|z|^2)^ p}{|1-\overline{w}z|^{2 p} }\,dm(z)\\
 & \quad +|w|^2 (1-|w|)^{ p(2-\la)}\int_{\D} \left|\frac{f(z)-f(w)}{(1-\overline{w}z)^{1+ p} }\right|^2(1-|z|^2)^ p\,dm(z)\\
 & \sim (1-|w|)^{ p(1-\la)} \int_{\D} |f'(z)|^2 (1-|\varphi_{w}( z)|^2)^ p\,dm(z)\\
& \quad + |w|^2(1-|w|)^{ p(1-\la)} \int_{\D} \left|\frac{f(z)-f(w)}{1-\overline{w}z} \right|^2 (1-|\varphi_{w}( z)|^2)^ p\,dm(z)\\
&\lesssim \|f\|_{\Di_p^{\la}}^2 + (1-|w|)^{ p(1-\la)} \int_{\D}
\left|\frac{f(z)-f(w)}{1-\overline{w}z }\right|^2 (1-|\varphi_{w}( z)|^2)^ p\,dm(z)\\
&= \|f\|_{\Di_p^{\la}}^2+ D_w.
\end{align*}
Observe that
\begin{align*}
D_w&=(1-|w|)^{ p(1-\la)} \int_{\D} \left|\frac{f(z)-f(w)}{1-\overline{w}z }\right|^2 (1-|\varphi_{w}( z)|^2)^ p\,dm(z)\\
&=(1-|w|)^{ p(1-\la)} \int_{\D} \left|\frac{f \circ \varphi_{w}( z)-f \circ \varphi_{w}( 0)}{1-\overline{w}\varphi_{w}( z) }\right|^2
|\varphi_{w}'( z)|^2 (1-|z|^2)^ p\,dm(z)\\
&= (1-|w|)^{ p(1-\la)} \int_{\D} \left|\frac{f \circ \varphi_{w}( z)-f \circ \varphi_{w}( 0)}{1-\overline{w} z}\right|^2
 (1-|z|^2)^ p\,dm(z)
 \end{align*}
To find an upper estimate for $D_w$, we follow the argument of \cite{PP}, pages 551-552. See also \cite[page 2080]{Xi2}.
The argument consists of applying a reproducing formula from \cite{RW}, the
Cauchy-Schwarz inequality, Fubini's theorem and the estimate  \cite[Lemma 3.10(b)]{Zhu}. We refrain  from writing all the
details since the argument applies \textit{mutatis mutandis}. The final steps of the calculation are  as follows
 \begin{align*}
D_w &\lesssim (1-|w|)^{ p(1-\la)} \int_{\D} \left|(f\circ\varphi_{w})'(z)\right|^2
\frac{(1-| z|^2)^{2+ p}}{|1-\overline{w}z|^2}\,dm(z)\\
 &\lesssim (1-|w|)^{ p(1-\la)} \int_{\D} \left|(f\circ\varphi_{w})'(z)\right|^2 (1-| z|^2)^{ p}\,dm(z)\\
&\lesssim (1-|w|)^{ p(1-\la)} \int_{\D} \left|f'(z)\right|^2 (1-|\varphi_{w}( z)|^2)^ p\,dm(z)\\
&\lesssim \|f\|_{\Di_p^{\la}}^2.
\end{align*}

Collecting all the above estimates gives $\n{J_g(f)}_{\Di_p^{\la}}\leq C\n{f}_{\Di_p^{\la}}$ which is the desired conclusion.
\end{proof}

\par The above theorems in combination with (\ref{MulVol}) give the following  corollary for multipliers  of $\Di_p^{\la}$.

 \begin{corollary}\label{MUL} Suppose $ 0< p, \la <1$ and $g\in\hol(\D).$ Then
 \begin{enumerate}
\item\label{Mg1} If $g\in \W_{p}\cap H^{\infty}$
then
$M_g:{\Di_p^{\la}}\to{\Di_p^{\la}}$ is bounded.
\item\label{Mg2} If
$ g\in\mathcal Q_{ p \la} \cap H^{\infty}$
then
$M_g:{\Di_p^{\la}}\to {\Di_p^{\la}}$ is bounded
\item\label{Mg3} If $M_g:{\Di_p^{\la}}\to{\Di_p^{\la}}$ is bounded
then $
g\in \Q_p\cap H^{\infty}$.
\end{enumerate}
\end{corollary}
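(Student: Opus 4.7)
The plan is to exploit the integration-by-parts identity (\ref{MulVol}), which says that $M_g(f) = J_g(f) + I_g(f) + f(0)g(0)$. Boundedness of $M_g$ on $\Di_p^{\la}$ is then equivalent to simultaneous boundedness of $J_g$ and $I_g$, modulo a bounded rank-one term, so each part reduces to an already-established theorem.

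For part \eqref{Mg1}, assume $g \in \W_p \cap H^\infty$. Theorem \ref{ThmOp}(2) makes $J_g$ bounded on $\Di_p^{\la}$, and Theorem \ref{thmIg} makes $I_g$ bounded on $\Di_p^{\la}$ since $g\in H^\infty$. Adding them (together with the constant term, which is harmless on a Banach space containing the constants) yields the boundedness of $M_g$. For part \eqref{Mg2}, assume $g\in\Q_{p\la}\cap H^\infty$ and set $q=p\la\in(0,p)$. Then Theorem \ref{ThmOp}(1) applies and gives that $J_g$ is bounded on $\Di_p^{q/p}=\Di_p^{\la}$; Theorem \ref{thmIg} again handles $I_g$, and we conclude as before.

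For the necessary condition \eqref{Mg3}, suppose $M_g$ is bounded on $\Di_p^{\la}$. The first task is to deduce $g\in H^\infty$; this is the only genuinely nontrivial step, and I would handle it by testing on the family $f_c$ of Lemma \ref{LemFunc}. Since $\sup_{c\in\D}\|f_c\|_{\Di_p^{\la}}\le K$, we get $\|gf_c\|_{\Di_p^{\la}}\le K\|M_g\|$ uniformly in $c$. Evaluating the growth estimate \eqref{growth} at $z=c$, and using the explicit value $|f_c(c)|=(1-|c|^2)^{-p(1-\la)/2}$, gives
\[
\frac{|g(c)|}{(1-|c|^2)^{p(1-\la)/2}}=|g(c)f_c(c)|\le\frac{C\|gf_c\|_{\Di_p^{\la}}}{(1-|c|)^{p(1-\la)/2}}\le\frac{CK\|M_g\|}{(1-|c|)^{p(1-\la)/2}},
\]
so that the singular factors cancel and $|g(c)|$ remains uniformly bounded for $|c|>\tfrac12$, hence $g\in H^\infty$.

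Once $g\in H^\infty$, Theorem \ref{thmIg} makes $I_g$ bounded on $\Di_p^{\la}$, and then \eqref{MulVol} in the form $J_g=M_g-I_g-f(0)g(0)$ forces $J_g$ to be bounded on $\Di_p^{\la}$ as well. Theorem \ref{NJg} then delivers $g\in\Q_p$, completing the proof. The only real obstacle is the $H^\infty$-extraction, which the test functions $f_c$ together with the pointwise growth bound dispatch cleanly; everything else is bookkeeping with the prior results.
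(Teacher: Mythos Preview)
Your argument is correct and follows exactly the route the paper indicates: the paper's proof is the single sentence ``the above theorems in combination with (\ref{MulVol}) give the following corollary,'' and you have spelled out precisely that combination, invoking Theorem~\ref{thmIg}, Theorem~\ref{NJg}, and both parts of Theorem~\ref{ThmOp} in the right places. Your extraction of $g\in H^{\infty}$ in part~\eqref{Mg3} via the test functions $f_c$ together with the pointwise growth bound~\eqref{growth} is a clean way to supply the one detail the paper leaves implicit; an equally short alternative is the standard eigenvalue argument for the transpose of $M_g$ on point evaluations, but your version works just as well.
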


 \textbf{Remark.}
Let $0<p<1$. We know  that $\W_p\subset \Q_p$, and this inclusion is strict \cite[Theorem 6.3.4]{XiG}.
At the same time for   $0<q<p$ we have $\Q_q\subset\Q_p$ with strict inclusion.
For each $q<p$ we give an example of a function $f$ such that $f\in \W_p$ but $f$ does not belong to $\Q_q$. Thus
$\W_p\nsubseteq \Q_q$ for any $q<p$.

Indeed with  $q, p$ as above consider the function
\begin{equation}\label{HadGap}
 f(z)= \sum_{k=1}^{\infty}a_k  z^{2^k}
 \end{equation}
 where $a_k=1/2^{k(1-q)/2}$. By a theorem of Yamashita \cite[Theorem 1(i)]{Yam} for such
 Hadamard gap series, and since
 $$
\limsup |a_k|2^{k\left(1-\frac{1+q}{2}\right)}=1<\infty,
 $$
 it follows that $f$ satisfies the growth condition
 $$
\sup_{z\in\D}|f'(z)| (1-|z|)^{\frac{1+ q}{2}}<\infty.
$$
Applying Proposition 4.2 of  \cite{ALXZ} (after adjusting the parameters involved to our notation) we find that
this function is a multiplier of $\Di_p$ because  $q<p$. Thus the bounded function $f$ belongs to $\W_p$.

On the other hand
$$
\sum_{k= 0}^{\infty} 2^{k(1- q)} \left(\sum_{2^k \leq n_j < 2^{k+1}} |a_j|^2\right) =\sum_{k=0}^{\infty}1=\infty,
$$
and therefore by  \cite[Theorem 1.2.1]{Xi} for such Hadamard gap series, $f\notin \Q_q$.

The complete description of the multiplier space $M(\Di_p^{\la})$ and of the symbols
 $g$ for which $J_g$ is bounded on $\Di_p^{\la}$,  seems to be a hard problem.

 We would like to thank the referee for reading the article and for encouraging us to include the paragraph about the  construction of general Morrey-type  spaces $M(X, w)$.

\end{document}